\documentclass{article}
\usepackage[twoside,
paperwidth=210mm,
paperheight=297mm,
textheight=622pt,
textwidth=468pt,
centering,
headheight=50pt,
headsep=12pt,
footskip=18pt,
footnotesep=24pt plus 2pt minus 12pt,
columnsep=2pc]{geometry}

\usepackage[auth-sc]{authblk}

\usepackage{mathtools}

\allowdisplaybreaks

\usepackage{amssymb}
\usepackage[mathscr]{eucal}

\usepackage{mismath}

\usepackage{hyperref}

\usepackage{bm} 

\usepackage{graphicx}
\graphicspath{{figs/}}

\usepackage{pgf}
\usepackage{pgfplots}
\pgfplotsset{compat=1.14}

\usepackage{tikz}
\usetikzlibrary{math}
\usetikzlibrary{calc}

\usepackage{subcaption}

\usepackage{multirow}

\usepackage{makecell}
\setcellgapes{2pt}

\usepackage{booktabs}

\usepackage{siunitx}
\usepackage{algorithm}
\usepackage{algorithmic}

\usepackage[capitalise]{cleveref}

\DeclareMathOperator{\Dist}{dist}
\DeclareMathOperator{\Span}{span}

\DeclareMathOperator{\Diag}{diag}

\DeclarePairedDelimiter{\RoundBrackets}{(}{)}
\DeclarePairedDelimiter{\CurlyBrackets}{\{}{\}}
\DeclarePairedDelimiter{\SquareBrackets}{[}{]}

\usepackage{stmaryrd}

\DeclareSymbolFont{sfletters}{OML}{cmbrm}{m}{it}

\DeclareMathSymbol{\salpha}{\mathord}{sfletters}{"0B}
\DeclareMathSymbol{\sbeta}{\mathord}{sfletters}{"0C}
\DeclareMathSymbol{\sgamma}{\mathord}{sfletters}{"0D}
\DeclareMathSymbol{\sdelta}{\mathord}{sfletters}{"0E}
\DeclareMathSymbol{\sepsilon}{\mathord}{sfletters}{"0F}
\DeclareMathSymbol{\szeta}{\mathord}{sfletters}{"10}
\DeclareMathSymbol{\seta}{\mathord}{sfletters}{"11}
\DeclareMathSymbol{\stheta}{\mathord}{sfletters}{"12}
\DeclareMathSymbol{\siota}{\mathord}{sfletters}{"13}
\DeclareMathSymbol{\skappa}{\mathord}{sfletters}{"14}
\DeclareMathSymbol{\slambda}{\mathord}{sfletters}{"15}
\DeclareMathSymbol{\smu}{\mathord}{sfletters}{"16}
\DeclareMathSymbol{\snu}{\mathord}{sfletters}{"17}
\DeclareMathSymbol{\sxi}{\mathord}{sfletters}{"18}
\DeclareMathSymbol{\spi}{\mathord}{sfletters}{"19}
\DeclareMathSymbol{\srho}{\mathord}{sfletters}{"1A}
\DeclareMathSymbol{\ssigma}{\mathord}{sfletters}{"1B}
\DeclareMathSymbol{\stau}{\mathord}{sfletters}{"1C}
\DeclareMathSymbol{\supsilon}{\mathord}{sfletters}{"1D}
\DeclareMathSymbol{\sphi}{\mathord}{sfletters}{"1E}
\DeclareMathSymbol{\schi}{\mathord}{sfletters}{"1F}
\DeclareMathSymbol{\spsi}{\mathord}{sfletters}{"20}
\DeclareMathSymbol{\somega}{\mathord}{sfletters}{"21}
\DeclareMathSymbol{\svarepsilon}{\mathord}{sfletters}{"22}
\DeclareMathSymbol{\svartheta}{\mathord}{sfletters}{"23}
\DeclareMathSymbol{\svarpi}{\mathord}{sfletters}{"24}
\DeclareMathSymbol{\svarrho}{\mathord}{sfletters}{"25}
\DeclareMathSymbol{\svarsigma}{\mathord}{sfletters}{"26}
\DeclareMathSymbol{\svarphi}{\mathord}{sfletters}{"27}
\DeclareMathSymbol{\sDelta}{\mathord}{sfletters}{"01}
\DeclareMathSymbol{\sTheta}{\mathord}{sfletters}{"02}

\usepackage{amsthm}
\newtheorem{theorem}{Theorem}[section]
\newtheorem{lemma}[theorem]{Lemma}

\theoremstyle{definition}
\newtheorem{definition}{Definition}

\crefname{assumption}{assumption}{assumptions}
\Crefname{assumption}{Assumption}{Assumptions}

\crefname{problem}{problem}{problems}
\Crefname{problem}{Problem}{Problems}

\theoremstyle{remark}

\usepackage{lineno}

\usepackage{easyReview}

\title{Graph-Based Meshfree Multi-scale Coarse Space Approximation for Two-Level Schwarz Methods}
\author[1]{Yucheng~Liu}
\author[2]{Tak~Shing~Au~Yeung}
\author[1]{Eric~T.~Chung}
\author[2]{Simon~See}
\affil[1]{Department of Mathematics, The Chinese University of Hong Kong, Shatin, Hong~Kong~SAR, China.}
\affil[2]{NVIDIAAI Technology Center NVAITC, NVIDIA, USA}

\date{}
 
\begin{document}
\maketitle

\begin{abstract}
Efficient simulation of Darcy flow in highly heterogeneous porous media requires iterative solvers that remain robust under large permeability contrasts and mixed boundary conditions. 
Spectral coarse spaces in two-level overlapping Schwarz methods provide such robustness, but their practical use is often limited by an expensive setup phase dominated by many local generalized eigenvalue solves. We propose a purely algebraic,  coarse-space approximation that avoids these repeated local eigensolves by using a graph neural network operating on the system-matrix graph. 
On the analysis side, we introduce a coefficient-weighted subspace-distance measure to quantify the discrepancy between the approximated and target local multiscale coarse spaces, and we derive a condition-number bound for the resulting preconditioned operator in terms of this distance. 
This bound yields a principled supervised-training objective and links learning error to solver performance. 
Numerical experiments on 2D and 3D high-contrast Darcy systems with varying mixed boundary conditions demonstrate that the proposed approach substantially reduces setup cost and improves end-to-end time-to-solution, while preserving robust convergence across the tested contrasts and boundary configurations.

\textbf{Keywords}: multiscale method, coarse space, two-level schwarz methods

\textbf{MSC codes}: 65N08, 65N15, 65N55
\end{abstract}

\section{Introduction}

Accurate simulation of flow in porous media underpins reservoir forecasting, groundwater management, and geothermal exploration \cite{helmig1997multiphase,zheng2002applied,abou2013petroleum}.
A central difficulty is the multiscale variability of permeability, spanning pore-scale features to field-scale geologic structures.
Resolving this heterogeneity is crucial: overly homogenized models can yield unreliable predictions of fluxes and plume migration \cite{gelhar1986stochastic,rubin2003applied}.
For Darcy-flow models, additional complications arise from high-contrast permeability fields and mixed boundary conditions that vary with operating regimes.
After discretization, these features translate into strongly ill-conditioned linear systems with challenging spectra.
Because direct solvers are often limited by memory and computational complexity at scale, Krylov subspace methods are the workhorse; their practical performance, however, depends not only on iteration counts but also on the setup cost of the preconditioner and its robustness to large coefficient contrasts and changing boundary constraints.

Robust preconditioning for such problems has advanced considerably over the past decades.
Classical incomplete factorizations, e.g., ILU \cite{meijerink1977iterative,saad2003iterative}, are appealing for their simplicity and modest memory footprint, but they may deteriorate for highly heterogeneous coefficients and large-scale systems.
To reduce mesh-dependent convergence, algebraic multigrid (AMG) \cite{ruge1987algebraic} is widely used and often achieves near-optimal complexity for elliptic problems; nevertheless, standard coarsening and interpolation heuristics can lose robustness in high-contrast, strongly anisotropic, or channelized media \cite{wan1999energy}.
Domain decomposition offers a complementary route to scalability.
In particular, two-level overlapping Schwarz methods \cite{toselli2004domain,dolean2015introduction} show that robustness to coefficient variation hinges on the choice of coarse space.
In high-contrast settings where standard coarse spaces are inadequate, spectral multiscale techniques—such as the Generalized Multiscale Finite Element Method (GMsFEM) \cite{efendiev2011multiscale,efendiev2013generalized,chung2018constraint,chung2023multiscale,ye2024robust} and the GenEO framework \cite{spillane2014abstract,dolean2015introduction}—provide a principled remedy.
These methods enrich the coarse space by solving local generalized eigenproblems that identify the dominant low-energy modes.
To reduce dependence on mesh geometry and enable a unified treatment in 2D and 3D, these spectral ideas can be cast in a purely algebraic two-level overlapping Schwarz framework that uses only the graph of the system matrix \cite{heinlein2018frosch,ADDThm,ADD,al2025robust}.
Nevertheless, even in algebraic form, the setup phase can dominate the total cost because it requires solving many local eigenproblems, creating a bottleneck for end-to-end simulations.

Specifically, we avoid repeated local generalized eigensolves at setup by using a graph neural network (GNN) to approximate the local spectral coarse spaces from the system-matrix graph and associated features.
We introduce a subspace-distance measure between the learned and target local coarse spaces and show that the condition number of the resulting preconditioned operator can be bounded in terms of this distance along with standard Schwarz constants.
Guided by this bound, we train the GNN in a supervised fashion to minimize the subspace distance to the target spectral coarse space.
The approach enables an adaptive local coarse dimension across subdomains, allocating more basis vectors to regions associated with low-energy modes induced by high contrast or channelization.
The resulting two-level preconditioner is symmetric positive definite (SPD) by construction, and is therefore compatible with the Conjugate Gradient method.
By incorporating boundary-condition descriptors into features of the input graph, the trained model generalizes across a range of mixed boundary conditions in our tests without problem-specific tuning.
Numerical experiments on 2D and 3D Darcy problems show iteration counts comparable to exact spectral coarse spaces while substantially reducing setup time, leading to improved end-to-end time-to-solution.

The paper is organized as follows: \Cref{pre} introduces the purely algebraic two-level overlapping Schwarz preconditioner; \Cref{nn} develops the learning-based construction and analysis; \Cref{nume} reports numerical results; and \Cref{con} concludes with future directions.

\section{Preliminaries}\label{pre}

\subsection{Problem setting}
Consider the Darcy flow problem characterized by highly heterogeneous permeability.
Let \(\Omega \subset \mathbb{R}^d\), \(d \in \{2,3\}\), be a bounded Lipschitz domain with boundary \(\partial\Omega = \Gamma_D \cup \Gamma_N\), where \(\Gamma_D\) and \(\Gamma_N\) denote the Dirichlet and Neumann boundaries, respectively.
\begin{equation}\label{eq:original_equation}
\left\{
\begin{aligned}
\kappa^{-1}\bm{u}+\nabla p &= \bm{0} \quad &&\text{in } \Omega,\\
\nabla \cdot \bm{u} &= f \quad &&\text{in } \Omega,\\
p &= p_D \quad &&\text{on } \Gamma_D,\\
\bm{u}\cdot \bm{n} &= g \quad &&\text{on } \Gamma_N.
\end{aligned}
\right.
\end{equation}
Let \(N_\Omega\) denote the dimension of the discrete space associated with a finite volume discretization.
In this work, We focus on the large systems of linear algebraic equations
\[
A u = f, \qquad A \in \mathbb{R}^{N_\Omega \times N_\Omega}, \quad f \in \mathbb{R}^{N_\Omega},
\]
arising from this method.

\subsection{Domain decomposition and subdomain coverings}
Let $\mathscr{G}(A)$ denote the undirected adjacency graph associated with a sparse, symmetric, positive semidefinite matrix $A$, and index its vertex set $V$ by the integers $1$ through $N_\Omega$.
We partition $\mathscr{G}(A)$ into $k \ll N_\Omega$ nonoverlapping subgraphs using a graph partitioning method (e.g., METIS). 
Equivalently, this induces a partition of the vertex set into disjoint subsets $V_{I,i}$, $i \in [1,k]$.
For $\delta \in \mathbb{N}$, define $V_{\Gamma,i}^{\delta}$ to be the set of vertices in $V \setminus V_{I,i}$ whose graph distance from $V_{I,i}$ is within $\delta$.
We obtain, via the above graph partition, the corresponding decomposition of the physical domain into the interior subdomains $\Omega_{I,i}$ and the interface regions $\Omega^{\delta}_{\Gamma,i}$, where $i$ is from $1$ to $k$ \cref{fig:GP}. 
Accordingly, the overlapping subdomain and corresponding local vertex set are
\[
\Omega^{\delta}_{i} :=\Omega_{I,i} \cup \Omega^{\delta}_{\Gamma,i}, \quad V_i^{\delta} :=  V_{I,i} \cup V_{\Gamma,i}^{\delta}, \quad N_{i} = \abs{V_i^{\delta}}.
\]

\begin{figure}[!ht]
  \centering
  \includegraphics[width=0.8\textwidth]{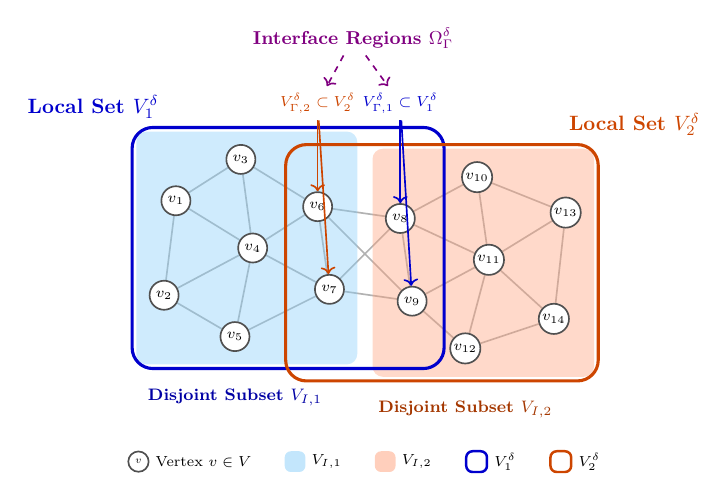}
  \caption{An illustration of the overlapping graph partition: the vertex set $V$ is first partitioned into disjoint subsets $V_{I,1}$ and $V_{I,2}$ (shaded regions). The local vertex sets $V_1^\delta$ and $V_2^\delta$ (outlined regions) are then formed by including the halo vertices $V_{\Gamma,i}^\delta$ within graph distance $\delta$.}
  \label{fig:GP}
\end{figure}

For each vertex $v\in V$, define
\[
 d_v := \# \{V_i^{\delta}: 1 \leq i \leq k , v \in V_i^{\delta}\},
\]
so that $v$ belongs to $d_v$ of the local vertex sets.
For each overlapping subdomain $\Omega^{\delta}_{i}$, denote its complement in $\Omega$ by $\Omega^{\delta}_{c,i}$.
Based on the graph partitioning, let $R_i$ denote the restriction operator from the global domain to the $i$-th overlapping subdomain, that is, $R_i : \Omega \to \Omega_i^{\delta}$.
Using the vertex degrees ${d_v}$, we define an algebraic partition of unity via positive diagonal matrices $D_i$ such that
\[
\sum_{i=1}^k  R_{i}^\top D_{i}  R_{i} = I_n.
\]

\subsection{Local overlapping solvers}
In each overlapping subdomain $\Omega_i^{\delta}$, define the local operator $A_i := R_i A R_i^{\top}$. 
Given a global residual $r$, the local overlapping solvers is obtained by solving the subproblem $A_i u_i = R_i r$ and prolongating back via $R_i^{\top}$.
The first-level of additive Schwarz preconditioner aggregates these local corrections
\begin{equation}\label{ams1}
  M_1^{-1} = \sum_{i=1}^k R_{i}^\top D_iA_i^{-1} R_{i}.
\end{equation}

\subsection{Coarse space correction}
Based on the overlapping partitioning of $A$, let $P_i$ be the permutation that orders the unknowns as $(I, \Gamma, c)$ with respect to subdomain $i$.
we have local block splitting matrix for each $i \in [1, k]$,
\[
P_i AP_i^T = \begin{pmatrix} 
A_{I I,i} & A_{I \Gamma,i} & 0\\ 
A_{\Gamma I,i} & A_{\Gamma \Gamma,i} & A_{\Gamma c,i}\\
0 & A_{c\Gamma,i} & A_{cc,i} 
\end{pmatrix}.
\]
Define $s_i \in R^{\abs{V_{\Gamma,i}^{\delta}}}$
\[
(s_i)_j = \sum_l \abs{(A_{\Gamma c,i})_{jl}}, \quad S_i := \Diag(s_i),
\]
and set 
\[
\tilde{A}_{\Gamma\Gamma,i} = A_{\Gamma\Gamma,i} - S_i.
\]
The corrected local block is then
\[
\tilde{A}_{i} = \begin{pmatrix} 
A_{II,i} & A_{I \Gamma,i} \\ 
A_{\Gamma I,i} & \tilde{A}_{\Gamma \Gamma,i}
\end{pmatrix}.
\]

Two bilinear forms $a_i(\cdot, \cdot)$ and $\tilde{a}_i(\cdot, \cdot)$ corresponding to the $i$-th subdomain are defined to construct local generalized multiscale coarse space, where
\[
a_i(u, v) = v^{\top} \Pi_i D_i A_{i} D_i \Pi_i u, \quad 
\tilde{a}_i(u, v) = v^{\top} \tilde{A}_{i} u, \quad \forall u,v \in \mathbb{R}^{N_i}
\]
and \( \Pi_i \) is the orthogonal projection on image space of $\tilde{A}_{i}$, i.e., $\Pi_i:\mathbb{R}^{N_i}\to \operatorname{Im}(\tilde A_i)$. Now, define the following local generalized spectral problem:
\begin{equation}\label{eq:a_i_func}
a_i(u, v) = \lambda \tilde{a}_i(u, v), \quad \forall v \in \mathbb{R}^{n_i}.
\end{equation}
The local generalized multiscale coarse space is 
\[
X_i = \Span \left\{ u \mid a_i(u, v) = \lambda \tilde{a}_i(u, v), \, \forall v \in \mathbb{R}^{n_i}, \, |\lambda| \geq \tau \right\}.
\]
In this paper, for each subdomain we select the $n^c$ eigenvectors associated with the largest $n^c$ generalized eigenvalues to form the local generalized multiscale coarse space; we denote the $n^c$-th largest eigenvalue on subdomain $i$ by $\tau_i$.

Finally, we define the local coarse space by taking the Euclidean orthogonal complement within $\ker(\tilde A_i)$:
\[
\big((\ker(D_i A_{i} D_i) \cap \ker(\tilde{A}_{i}))^{\perp} \cap \ker(\tilde{A}_{i})\big)\oplus X_i,
\]
The basis of the $i$-th local coarse space is assembled as the column matrix $Z_i$.
The global restriction for the coarse correction is assembled as
\[
R_0^{\top} = \begin{bmatrix}
R_1^{\top} D_1 Z_1 & \cdots & R_k^{\top} D_k Z_k
\end{bmatrix}.
\]

Given a global residual $r$, the coarse space correction is obtained by solving the subproblem $R_0 A R_0^{\top} u_0 = R_i r$ and prolongating back via $R_0^{\top}$. 
The second-level of additive Schwarz preconditioner is 
\begin{equation}\label{ams2}
  M_2^{-1} = R_{0}^\top (R_{0}AR_{0}^\top)^{-1} R_{0}.
\end{equation}

\subsection{Albebraic two-level schwarz preconditioner}
The algebraic two-level Schwarz preconditioner is defined by the additive combination of the local overlapping level and the coarse level:
\begin{equation}\label{ams}
    M_{AMS}^{-1} = M_1^{-1} + M_2^{-1}.
\end{equation}

\section{Methodology}\label{nn}
In the construction of an algebraic two-level Schwarz preconditioner, a collection of local generalized spectral problems \cref{eq:a_i_func} must be solved on overlapping subdomains to generate the multiscale subspaces used to build the coarse space.
As reviewed in the preceding section, each local spectral problem is assembled purely from the sparse symmetric subdomain matrix $A_i$ induced by the overlapping graph partition, together with partition-dependent diagonal modifications, which can be encoded by nodewise properties, $d_v$ and $s_v$.
This observation motivates a graph-based abstraction of the local spectral construction: we represent each subdomain by an attributed weighted graph whose connectivity and edge weights are given by the sparsity pattern and entries of $A_i$, while the node features are defined as a three-dimensional vector.
Specifically, for a node $v$ in the overlapping subdomain, this feature vector consists of the type of the node (set to 1 if $v$ lies on the subdomain boundary and 0 if in the interior), the value $d_v$, and the diagonal correction $s_v$ (defined as 0 if $v$ is in the interior).
On this representation, we employ a graph neural network to iteratively update node embeddings and to predict the multiscale basis functions associated with each subdomain, thereby providing a data-driven surrogate for the repeated solution of local generalized spectral problems within the two-level Schwarz framework.
In what follows, we denote the matrix encoding the connectivity and edge weights of the graph derived from an overlapping subdomain as $A$ (corresponding to $A_i$ defined above), and the node feature matrix as $Z$.

\subsection{Neural network architectures}
SP-LPMA GUNet \cref{fig:Global_Arch} (spectral-prior low-pass multiscale-attentive Graph U-Net) is built by integrating a spectral low-pass frontend, a Graph U-Net, and multi-head attention. 
Given the graph structure of each subdomain, the model directly learns local generalized eigenvectors that span the required coarse space.
Detailed architectural specifications are provided below.

\begin{figure}[!ht]
  \centering
  \includegraphics[width=\textwidth]{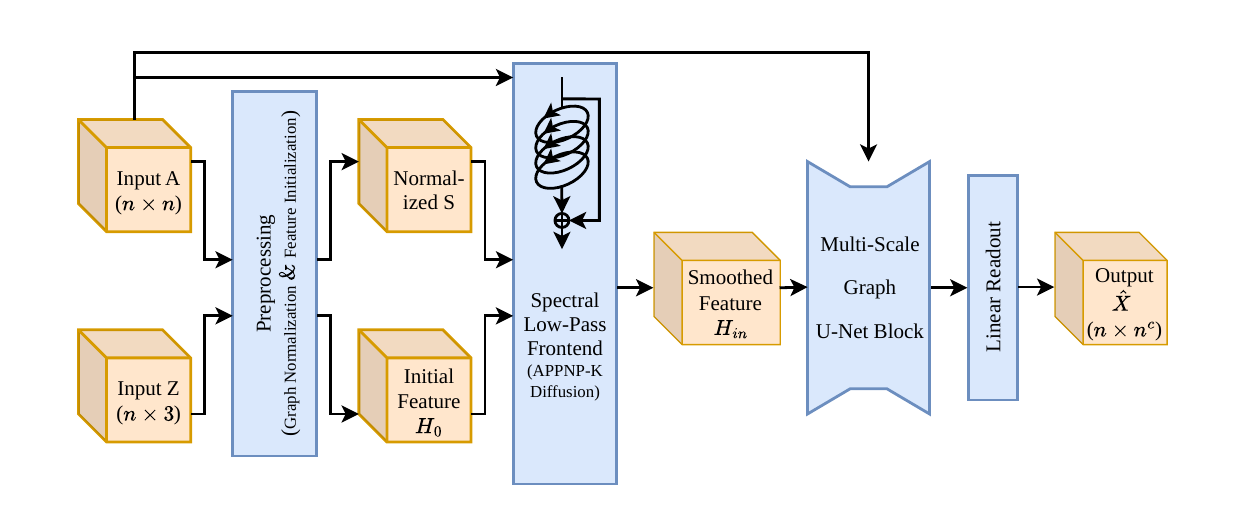}
  \caption{SP-LPMA GUNet Structure}
  \label{fig:Global_Arch}
\end{figure}

\begin{itemize}
    \item \textbf{Input:} $A \in \mathbb{R} ^ {n \times n}$, $Z \in \mathbb{R} ^ {n \times 3}$, where $n$ is the number of nodes in a subdomain.
    
    \item \textbf{Pre-processing:} 
    \begin{itemize}
        \item \textit{Graph Normalization:} We compute the degree matrix \(D = \Diag({d_v}) \in \mathbb{R}^{n \times n}\) and construct the normalized adjacency matrix:
        \[
            S = D^{-\frac{1}{2}} A D^{-\frac{1}{2}} \in \mathbb{R}^{n \times n}.
        \]
        \item \textit{Feature Initialization:} The input coordinates $Z$ are mapped to an initial high-dimensional feature space via a linear transformation:
        \[
            H_0 = Z W_{\mathrm{pre}} + b_{\mathrm{pre}} \in \mathbb{R}^{n \times d_0},
        \]
        where \(W_{\mathrm{pre}} \in \mathbb{R}^{3 \times d_0}\) and \(b_{\mathrm{pre}} \in \mathbb{R}^{d_0}\) are trainable weights and biases, and \(d_0\) is the initial hidden dimension.
    \end{itemize}
    
    \item \textbf{Spectral Low-Pass Frontend Block:}
    This block projects the columns of \(H_0\) (graph features) onto the low-frequency subspace spanned by the columns of \(S\) as follows \cite{gasteiger2018predict}:
    \begin{itemize}
        \item \textit{APPNP Diffusion:} We apply \(K\) steps of personalized PageRank-style diffusion. Let \(V_0 = H_0\), then for \(t=1, \dots, K\):
        \[
            V_t = S V_{t-1}, \quad H_{\mathrm{low}} = (1-\alpha) \sum_{t=0}^{K} \alpha^t V_t \in \mathbb{R}^{n \times d_0},
        \]
        where \(K \in \mathbb{N}^+\) (number of diffusion steps) and \(\alpha \in (0,1)\) (teleport probability) are predefined hyperparameters.
        \item \textit{Residual Normalization:} 
        \[
            H_{\mathrm{in}} = \mathrm{LayerNorm}\!\left(H_{\mathrm{low}} + \beta H_0\right) \in \mathbb{R}^{n \times d_0},
        \]
        where \(\beta\) is a hyperparameter. Here, \(\mathrm{LayerNorm}\) normalizes the features across the channel dimension for each node independently, which means that the mean and standard deviation are computed over each row vector of length \(d_0\).
    \end{itemize}

    \item \textbf{Multi-Scale Attention via Graph U-Net Block} (\cref{fig:GUnet_Arch}): 
    This module employs a U-shaped architecture of depth \(L\) for feature aggregation \cite{GUNET}.
    Let \(n_l\) and \(d_l\) denote the number of nodes and the feature dimension at resolution level \(l\) (with \(n_0=n\)).
    We employ \(N_{\mathrm{head}}\) attention heads in the signed attentive convolution \cite{huang2019signed}, where \(d_{\mathrm{head}}\) is the feature dimension of each head; hence the concatenated per-node output dimension at the level \(l\) is \(d_{l+1}=N_{\mathrm{head}}\times d_{\mathrm{head}}\).
    Let the initial state be \(H^{(0)} = H_{\mathrm{in}}\) and \(A^{(0)} = A\).
    
    \begin{itemize}
        \item \textbf{Encoder (Downscaling):} For each level \(l = 0, \dots, L-1\):
        \begin{itemize}
            \item \textit{Signed Attentive Convolution:} We decompose \(A^{(l)}\) into positive and negative components \(A^{(l)+} \) and \(A^{(l)-}\). Let \(\mathcal{N}^{\pm}(i)\) be the set of neighbors of node \(i\) in \(A^{(l)\pm}\). For each attention head \(h \in \{1, \dots, N_{\mathrm{head}}\}\):
            \begin{equation*}
            \begin{aligned}
                &Q_h = H^{(l)} W_h^{(l)} \in \mathbb{R}^{n_l \times d_{\mathrm{head}}}, \\
                &e_{ij}^{(h)\pm} = \tanh \big(Q_h[i,:] + Q_h[j,:]\big)  a_h + \gamma^{\pm} A^{(l)\pm}_{ij}, \\
                &\alpha_{ij}^{(h)\pm} = \exp(e_{ij}^{(h)\pm})/\sum_{k \in \mathcal{N}^{\pm}(i)} \exp(e_{ik}^{(h)\pm}), \\
                &O_h[i,:] = \sum_{j \in \mathcal{N}^{+}_{(i)}} \alpha_{ij}^{(h)+} Q_h[j,:] - \sum_{j \in \mathcal{N}^{-}_{(i)}} \alpha_{ij}^{(h)-} Q_h[j,:],
            \end{aligned}
            \end{equation*}
            where \(W_h^{(l)} \in \mathbb{R}^{d_l \times d_{\mathrm{head}}}\), \(a_h \in \mathbb{R}^{d_{\mathrm{head}}}\), and \(\gamma^{\pm} \in \mathbb{R}\) are trainable parameters. The outputs from all heads are concatenated along the feature dimension, denoted by \(\parallel\), followed by an Exponential Linear Unit (ELU) activation:
            \[
                H_{\mathrm{skip}}^{(l)} = \operatorname{ELU}\!\left( \big\|_{h=1}^{N_{\mathrm{head}}} O_h \right) \in \mathbb{R}^{n_l \times d_{l+1}}.
            \]
            
            \item \textit{Graph Pooling:} We compute score \(s^{(l)} = H_{\mathrm{skip}}^{(l)} w_{p}^{(l)} \in \mathbb{R}^{n_l}\) using trainable  vector \(w_{p}^{(l)} \in \mathbb{R}^{d_{l+1}}\). Define the index set \(\mathcal{I}^{(l+1)} \subset \{1, \dots, n_l\}\) corresponding to the nodes with the top \(k_l = \lfloor p_{\mathrm{rate}} n_l \rfloor\) highest scores, where \(p_{\mathrm{rate}} \in (0,1)\) is the pooling ratio hyperparameter. The coarsened graph and features are constructed by extracting the submatrix and sub-tensor indexed by \(\mathcal{I}^{(l+1)}\):
            \begin{equation*}
            \begin{aligned}
                & H^{(l+1)} = H_{\mathrm{skip}}^{(l)}[\mathcal{I}^{(l+1)}, :] \odot \operatorname{sigmoid}\!\big(s^{(l)}[\mathcal{I}^{(l+1)}]\big) \in \mathbb{R}^{n_{l+1} \times d_{l+1}}, \\
                & A^{(l+1)} = A^{(l)}[\mathcal{I}^{(l+1)}, \mathcal{I}^{(l+1)}] \in \mathbb{R}^{n_{l+1} \times n_{l+1}},
            \end{aligned}
            \end{equation*}
            where \(\odot\) denotes element-wise multiplication, and \(n_{l+1} = k_l\).
        \end{itemize}

        \item \textbf{Bottleneck Layer:} At the coarsest level \(L\), a residual fully-connected layer is applied:
        \begin{equation*}
            H_{\mathrm{dec}}^{(L)} = H^{(L)} + \left(H^{(L)} W_{\mathrm{fc}} + b_{\mathrm{fc}}\right) \in \mathbb{R}^{n_L \times d_L},
        \end{equation*}
        with trainable parameters \(W_{\mathrm{fc}} \in \mathbb{R}^{d_L \times d_L}\) and \(b_{\mathrm{fc}} \in \mathbb{R}^{d_L}\).

        \item \textbf{Decoder (Upscaling):} For each level \(l = L-1, \dots, 0\):
        \begin{itemize}
            \item \textit{Unpooling and Gated Skip Fusion:} We restore the spatial resolution by mapping the features \(H_{\mathrm{dec}}^{(l+1)}\) back to their original indices \(\mathcal{I}^{(l+1)}\) in an \(n_l\)-dimensional space, padding unselected nodes with zeros. Let this unpooled feature matrix be \(H_{\mathrm{up}}^{(l)} \in \mathbb{R}^{n_l \times d_{l+1}}\). It is then fused with the encoder's skip connection via a learned gating mechanism:
            \begin{equation*}
            \begin{aligned}
                & U^{(l)} = \left[ H_{\mathrm{up}}^{(l)} \parallel H_{\mathrm{skip}}^{(l)} \right] W_{s}^{(l)} \in \mathbb{R}^{n_l \times d_{l+1}}, \\
                & g^{(l)} = \operatorname{sigmoid}(U^{(l)} W_{g}^{(l)} + b_{g}^{(l)}) \in \mathbb{R}^{n_l \times d_{l+1}}, \\
                & Y^{(l)} = g^{(l)} \odot H_{\mathrm{up}}^{(l)} + (1 - g^{(l)}) \odot H_{\mathrm{skip}}^{(l)} \in \mathbb{R}^{n_l \times d_{l+1}},
            \end{aligned}
            \end{equation*}
            where \(W_{s}^{(l)} \in \mathbb{R}^{2d_{l+1} \times d_{l+1}}\) and \(W_{g}^{(l)} \in \mathbb{R}^{d_{l+1} \times d_{l+1}}\) are trainable weight matrices.
            \item \textit{Attentive Convolution:} The fused features are refined on the graph \(A^{(l)}\) using the same Signed Attentive Convolution operator defined in the encoder:
            \begin{equation*}
                H_{\mathrm{dec}}^{(l)} = \mathrm{AttnConv}\big(Y^{(l)}, A^{(l)}\big) \in \mathbb{R}^{n_l \times d_l}.
            \end{equation*}
        \end{itemize}
    \end{itemize}
    The final output of the U-Net block is \(H_{\mathrm{dec}}^{(0)} \in \mathbb{R}^{n \times d_0}\).
    
    \item \textbf{Linear Readout Block:} Map refined fine-scale features to the spectral subspace estimate of dimension $n_c$:
    \[
        \widehat{X} = H_{\mathrm{dec}}^{(0)} W_{\mathrm{out}} + b_{\mathrm{out}} \in \mathbb{R}^{n \times n_c},
    \]
    where \(W_{\mathrm{out}} \in \mathbb{R}^{d_0 \times n_c}\) and \(b_{\mathrm{out}} \in \mathbb{R}^{n_c}\) are trainable parameters.
\end{itemize}

\begin{figure}[!ht]
  \centering
  \includegraphics[width=0.8\textwidth]{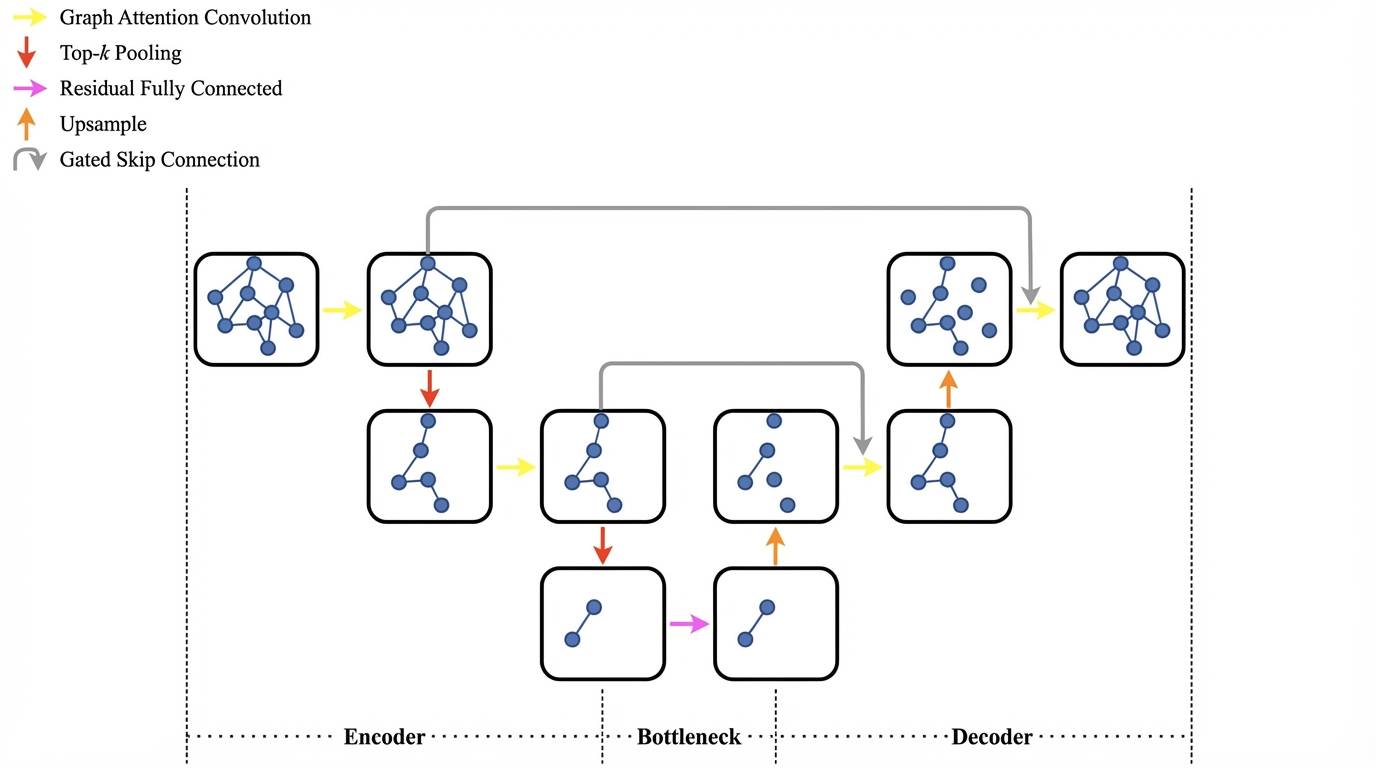}
\caption{Three Level Graph U-Net Structure}
\label{fig:GUnet_Arch}
\end{figure}

\subsection{Loss function}
The two-level Schwarz coarse space is spanned by locally constructed generalized multiscale basis functions extended to the global domain.
To enable direct learning of the basis function set within each local subdomain, we introduce a metric to quantify discrepancies between local coarse spaces.
Specifically, leveraging the inner product induced by the right-hand-side operator of the generalized local spectral problem, we define a projection-based distance between the target and predicted subspaces.
In neural network training, we adopt $\Dist_{\tilde{A}}\RoundBrackets{\text{target}, \text{prediction}}$  as the loss function; its precise definition is provided below.
\begin{definition}
  \label{defdist} Denote
  \[
    X_{j} = \Span \CurlyBrackets*{x_{j, 1},\ x_{j, 2}, \dots, x_{j,n^c}}, \quad
    Y_{j} = \SquareBrackets*{\psi_{j, 1}  \psi_{j, 2}  \dots \psi_{j, n^{c}}},
  \]
  where $X_{j}$ is the $j$-th local coarse space, $\CurlyBrackets{\psi_{j, 1},\ \psi_{j, 2}, \dots, \psi_{j, n^{c}}}$ is an orthonormal basis of subspace $X_{j}$ with respect to the inner product $\RoundBrackets{\cdot, \cdot}_{\tilde{A_j}}$ and they are arranged column-wise to form the matrix denoted as $Y_{j}$.
  For two local coarse spaces $X_j^{(1)}$ and $X_j^{(2)}$ defined on the same coarse element ($j$-th), we define the distance
  \begin{equation}
    \Dist_{\tilde{A}_j}\RoundBrackets{X_{j}^{(1)}, X_{j}^{(2)}}= \RoundBrackets*{n^{c} - \norm{\RoundBrackets{Y_{j}^{(1)}}^{\intercal} \tilde{A}_j Y_{j}^{(2)}}_F^{2}}^{1/2},
  \end{equation}
  where $\norm{\cdot }_{F}$ is Frobenius norm.
\end{definition}

For notational simplicity, we henceforth suppress the coarse-element index $j$ whenever no ambiguity can arise.

\begin{theorem}
The distance in \cref{defdist} enjoys the following properties:
\begin{itemize}
  \item Basis invariance: it is independent of the choice of $(\cdot,\cdot)_{\tilde{A}}$-orthonormal bases for the subspaces.
  \item Non-negativity: $n^{c} - \bigl\|(Y^{(1)})^{\intercal}\tilde{A}\,Y^{(2)}\bigr\|_{F}^{2} \ge 0$.
  \item Positive definiteness: $\Dist_{\tilde{A}}\RoundBrackets{X^{(1)},X^{(2)}}=0$ if and only if $X^{(1)}=X^{(2)}$ (as subspaces).
  \item Symmetry: $\Dist_{\tilde{A}}\RoundBrackets{X^{(1)},X^{(2)}}=\Dist_{\tilde{A}}\RoundBrackets{X^{(2)},X^{(1)}}$.
  \item Triangle inequality: for any $X^{(1)},X^{(2)},X^{(3)}$,
  \[
    \Dist_{\tilde{A}}\RoundBrackets{X^{(1)},X^{(2)}}
    + \Dist_{\tilde{A}}\RoundBrackets{X^{(2)},X^{(3)}}
    \;\ge\;
    \Dist_{\tilde{A}}\RoundBrackets{X^{(1)},X^{(3)}}.
  \]
\end{itemize}
\end{theorem}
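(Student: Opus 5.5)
The plan is to reduce everything to the standard chordal (projection Frobenius) distance between subspaces of equal dimension, after a change of variables that turns the $\tilde A$-inner product into the Euclidean one. Since the definition presupposes that $(\cdot,\cdot)_{\tilde A}$ is an inner product on the subspaces involved, I will first assume $\tilde A$ is symmetric positive definite. Let $L=\tilde A^{1/2}$ and set $Q^{(m)} := L\,Y^{(m)}$ for $m=1,2,3$. Because the columns of $Y^{(m)}$ are $\tilde A$-orthonormal, each $Q^{(m)}$ has Euclidean-orthonormal columns: $(Q^{(m)})^\intercal Q^{(m)} = I_{n^c}$. Moreover $(Y^{(1)})^\intercal \tilde A\, Y^{(2)} = (Q^{(1)})^\intercal Q^{(2)}$.

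The key identity comes from the orthogonal projectors $\Pi^{(m)} := Q^{(m)}(Q^{(m)})^\intercal$ onto $LX^{(m)}$. Using cyclicity of the trace,
\[
\bigl\|(Q^{(1)})^\intercal Q^{(2)}\bigr\|_F^2=\operatorname{tr}\bigl(\Pi^{(1)}\Pi^{(2)}\bigr),
\qquad
\bigl\|\Pi^{(1)}-\Pi^{(2)}\bigr\|_F^2 = 2n^c-2\operatorname{tr}\bigl(\Pi^{(1)}\Pi^{(2)}\bigr),
\]
where the second equality uses $\operatorname{tr}\Pi^{(m)}=n^c$ and $(\Pi^{(m)})^2=\Pi^{(m)}$. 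Hence
\[
\Dist_{\tilde A}\bigl(X^{(1)},X^{(2)}\bigr)=\tfrac{1}{\sqrt2}\,\bigl\|\Pi^{(1)}-\Pi^{(2)}\bigr\|_F .
\]
All five properties then follow from this formula.

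\textbf{Basis invariance.} Any other $\tilde A$-orthonormal basis matrix of $X^{(m)}$ has the form $Y^{(m)}O$ with $O$ orthogonal. The projector $\Pi^{(m)}$ is unchanged under this replacement, or equivalently the Frobenius norm is invariant under $O$.

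\textbf{Non-negativity.} This is immediate from the identity. Alternatively, $\|(Q^{(1)})^\intercal Q^{(2)}\|_2\le 1$ gives singular values $\cos\theta_i\le 1$, so the quantity equals $\sum_i\sin^2\theta_i\ge0$.

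\textbf{Symmetry.} The formula is symmetric, since the Frobenius norm is transpose-invariant.

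\textbf{Triangle inequality.} This is the triangle inequality for $\|\cdot\|_F$ applied to $\Pi^{(1)}-\Pi^{(3)}=(\Pi^{(1)}-\Pi^{(2)})+(\Pi^{(2)}-\Pi^{(3)})$.

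\textbf{Positive definiteness.} A zero distance forces $\Pi^{(1)}=\Pi^{(2)}$, so the ranges agree: $LX^{(1)}=LX^{(2)}$. Since $L$ is invertible, $X^{(1)}=X^{(2)}$. The converse direction is trivial.

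The main obstacle is that $\tilde A_j$ in the paper may only be positive semidefinite; the paper itself refers to $\ker(\tilde A_i)$. In that case I would replace $\tilde A^{1/2}$ by any factor $L$ with $\tilde A=L^\intercal L$, for instance the PSD square root. Every step above survives except positive definiteness. There I would use the standing assumption, implicit in \cref{defdist}, that $(\cdot,\cdot)_{\tilde A}$ is a genuine inner product on the subspaces considered, i.e.\ that $L$ is injective on $X^{(1)}+X^{(2)}$. Under that assumption $LX^{(1)}=LX^{(2)}$ still implies $X^{(1)}=X^{(2)}$, and the argument is complete.
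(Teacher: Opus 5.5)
Your proof is correct and follows the natural route. The paper only cites \cite{liu2024learning} for this theorem, but the identity you reduce everything to, $\Dist_{\tilde A}^2(X^{(1)},X^{(2)})=\tfrac12\|\Pi^{(1)}-\Pi^{(2)}\|_F^2$ with projectors built from $\tilde A^{1/2}Y^{(m)}$, is exactly what the paper itself derives (as $\|\Pi_0-\hat\Pi_0\|_F^2=2\Dist_{\tilde A}^2$) in the proof of its condition-number theorem, and the five properties follow from it as you describe.

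Your semidefinite caveat is genuinely needed. Reading the standing assumption as positive definiteness on each subspace separately would not suffice; you really do need injectivity on $X^{(1)}+X^{(2)}$, as you state. For example, with $\tilde A=\Diag(1,0)$, $X^{(1)}=\Span\{e_1\}$ and $X^{(2)}=\Span\{e_1+e_2\}$, the form is positive definite on each subspace, yet $\Dist_{\tilde A}(X^{(1)},X^{(2)})=0$. This cannot happen when both subspaces lie in $\operatorname{Im}(\tilde A)$, which is what the paper's condition-number lemma assumes for the spectral part $I_i$ of the coarse space.
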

The proof can be found in \cite{liu2024learning}.

Before demonstrating the suitability of the proposed loss function for the two-level Schwarz preconditioner, we first establish the necessary theoretical groundwork. 

To quantify the performance of the two-level Schwarz preconditioner, we present the following convergence analysis. The proof can be found in \cite{ADD,ADDThm}.

\begin{lemma}\label{cond}
    Following the notation in \cref{pre}, for any $i \in [1,k]$, assume $I_i \subseteq \Im(\tilde A_i)$ and $K_i \subseteq \ker(\tilde A_i)$, with $K_i$ containing $(\ker(D_i A_i D_i)\cap \ker(\tilde A_i))^{\perp} \cap \ker(\tilde A_i)$. 
    Let $\tau_i$ be a strictly positive real number and $\Pi_i$ be an orthogonal projection onto the subspace $S_i = K_i \oplus I_i$. 
    If the local coarse spaces in the two-level Schwarz preconditioner chosen as $S_i$ and the following inequality holds:
\begin{equation}
    (u - \Pi_i u)^\top D_i A_i D_i (u - \Pi_i u) \leq \tau_i u^\top \tilde A_i u \quad \forall u \in \mathbb{R}^{N_i},
\end{equation}
    the condition number of matrix preconditioned by \cref{ams} can be bounded by
\begin{equation}\label{bound}
    \kappa_2(M_{AMS}^{-1} A) \leq (k_c + 1)\left(2 + (2k_c + 1) k_m \max_i \tau_i\right),
\end{equation}
where $k_c$ denotes the minimum number of colors required to color the graph of A such that every two neighboring subdomains have different colors and $k_m$ denotes the maximum number of overlapping subdomains sharing a row of $A$.
\end{lemma}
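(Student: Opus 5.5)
The plan is to follow the abstract additive Schwarz framework (as in \cite{ADD,ADDThm}). The two-sided spectral bound
\[
\lambda_{\min}(M_{AMS}^{-1}A)\ \ge\ C_0^{-2},\qquad \lambda_{\max}(M_{AMS}^{-1}A)\ \le\ k_c+1
\]
will be obtained from a stable splitting and a coloring argument, respectively. Since both $M_{AMS}^{-1}$ and $A$ are SPD, $\kappa_2(M_{AMS}^{-1}A)$ is the ratio of these two extreme eigenvalues. Hence it suffices to construct, for every $u$, a decomposition
\[
u=R_0^\top u_0+\sum_i R_i^\top u_i
\]
with the stability estimate
\[
u_0^\top R_0AR_0^\top u_0+\sum_i u_i^\top A_iu_i\le C_0^2\,u^\top Au,
\]
where
\[
C_0^2=2+(2k_c+1)k_m\max_i\tau_i.
\]
The prefactor $k_c+1$ is then absorbed by the upper bound.

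\textbf{Upper bound.} The standard coloring argument gives the upper bound. Subdomains of the same color are $A$-orthogonal, since they share no matrix couplings. Each color class therefore contributes a projection-like operator with norm $\le 1$. The coarse level $M_2^{-1}A$ is an exact $A$-orthogonal projection. Summing over the $k_c$ colors and the coarse level gives $\lambda_{\max}\le k_c+1$.

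\textbf{Lower bound.} For the stable splitting, set $u_i:=D_i(I-\Pi_i)R_iu$ and let the coarse component be $\sum_i R_i^\top D_i\Pi_iR_iu$. This is admissible because $\Pi_iR_iu\in S_i=\operatorname{span}(Z_i)$, so the coarse component lies in the range of $R_0^\top$. The partition of unity $\sum_iR_i^\top D_iR_i=I$ then yields $u=R_0^\top u_0+\sum_iR_i^\top u_i$.

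For the local terms, the hypothesis gives
\[
u_i^\top A_iu_i=(R_iu-\Pi_iR_iu)^\top D_iA_iD_i(R_iu-\Pi_iR_iu)\le\tau_i\,(R_iu)^\top\tilde A_iR_iu.
\]
The key algebraic fact is
\[
\sum_i (R_iu)^\top\tilde A_iR_iu\le k_m\,u^\top Au.
\]
To prove it, observe that $u^\top Au-(R_iu)^\top\tilde A_iR_iu$ is itself a positive semidefinite form in $u$. The reason is that the correction $S_i$ is built from the absolute row sums of $A_{\Gamma c,i}$, so by Gershgorin and diagonal dominance it absorbs the coupling to the complement $c$. In other words, $\tilde A_i$ is the "Neumann" matrix obtained by splitting $A$ into nonnegative local pieces. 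Summing these local contributions and counting multiplicity with $k_m$ gives the claimed inequality.

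For the coarse term, write
\[
R_0^\top u_0=u-\sum_i R_i^\top u_i
\]
and use the triangle inequality in the $A$-norm. This gives
\[
\|R_0^\top u_0\|_A^2\le 2\|u\|_A^2+2\Big\|\sum_iR_i^\top u_i\Big\|_A^2.
\]
The coloring bound then yields
\[
\Big\|\sum_iR_i^\top u_i\Big\|_A^2\le k_c\sum_i\|u_i\|_{A_i}^2.
\]
Combining these estimates produces exactly the constant $2+(2k_c+1)k_m\max\tau_i$.

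\textbf{Main obstacle.} The hardest step is the algebraic positivity claim that $A$ dominates the sum of the $\tilde A_i$ up to the factor $k_m$. This must hold without any PDE structure, so it needs a careful row-by-row splitting of $A$ into local semidefinite pieces. For this I would invoke the construction in \cite{ADD} directly. A secondary difficulty is handling the kernel part $K_i$, which is needed so that $u_i$ stays controlled when $\tilde A_i$ is singular.
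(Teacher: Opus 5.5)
Your overall architecture is the one used in the references the paper cites for this lemma (\cite{ADD,ADDThm}). The splitting $u_i=D_i(I-\Pi_i)R_iu$ with coarse part $\sum_iR_i^\top D_i\Pi_iR_iu$ in the range of $R_0^\top$, the bound $\|R_0^\top u_0\|_A^2\le 2\|u\|_A^2+2k_c\sum_i\|u_i\|_{A_i}^2$, and the coloring estimate $\lambda_{\max}\le k_c+1$ reproduce the constant exactly. The step that does not go through as you justify it is $\sum_i(R_iu)^\top\tilde A_iR_iu\le k_m\,u^\top Au$. Positivity of each $A-R_i^\top\tilde A_iR_i$ only yields the factor $k$ (the number of subdomains) after summation. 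No refinement of that property alone can do better. The Schur-complement choice $\tilde A_{\Gamma\Gamma,i}=A_{\Gamma\Gamma,i}-A_{\Gamma c,i}A_{cc,i}^{-1}A_{c\Gamma,i}$ also satisfies $0\preceq R_i^\top\tilde A_iR_i\preceq A$. Yet for the 1D Dirichlet Laplacian with subdomains of fixed width and $u=\mathbf{1}$, one has $u^\top Au=2$ while $\sum_iu^\top R_i^\top\tilde A_iR_iu$ grows like $\log k$.

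What is needed is an additive splitting of $A$ itself into nonnegative pieces, each counted at most $k_m$ times. For symmetric diagonally dominant $A$ (as produced by TPFA), write
\[
A=\sum_{\{p,q\}:\,p\neq q,\ a_{pq}\neq 0}|a_{pq}|\,(e_p-\sigma_{pq}e_q)(e_p-\sigma_{pq}e_q)^\top+\Diag(r),\qquad \sigma_{pq}=-\operatorname{sign}(a_{pq}),\quad r_p=a_{pp}-\sum_{q\neq p}|a_{pq}|\ge 0 .
\]
Because the $(I,c)$ block vanishes, subtracting $S_i$ removes exactly the diagonal contributions of the edges joining $V_i^\delta$ to its complement. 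Hence $R_i^\top\tilde A_iR_i$ is precisely the sub-sum over edges with both endpoints in $V_i^\delta$, plus $\Diag(r)$ restricted to $V_i^\delta$. Each vertex, hence each edge, lies in at most $k_m$ of the sets $V_i^\delta$, so you get $\sum_iR_i^\top\tilde A_iR_i\preceq k_mA$, and $\tilde A_i\succeq 0$ for free. This uses diagonal dominance, which the lemma does not state but the setting supplies. Separately, your SPD claim and the projection argument for $\lambda_{\max}$ require the first level to be $\sum_iR_i^\top A_i^{-1}R_i$, the form treated in the cited works. With $D_i$ inserted as in \eqref{ams1}, the operator is not symmetric and that argument does not apply as written.
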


Next, we show that the coefficient $\tau_i$ in \cref{bound} only depend on the subspace of the $\Im(\tilde A_i)$.

\begin{lemma}\label{conddep}
    Let \( B, \tilde{B} \in \mathbb{R}^{m \times m} \) be two symmetric positive semidefinite matrices.
    Let \( \ker(\tilde{B}) \), \( \Im(\tilde{B}) \) be the null space and image space of \( \tilde{B} \), respectively.
    Suppose subspaces $K$ and $I$ satisfy
\[
I \subseteq \Im(\tilde B), \quad (\ker(B)\cap \ker(\tilde B))^{\perp} \cap \ker(\tilde B) \subseteq K \subseteq \ker(\tilde B).
\]
Let $\Pi$ denote the orthogonal projector onto the subspace $S := K \oplus I$ and $\tau>0$ be arbitrary. Then the following two statements are equivalent.
\begin{itemize}
    \item For any $u \in \mathbb{R}^{m}$, we have
    \[
    (u - \Pi u)^\top B (u - \Pi u) \leq \tau u^\top \tilde B u.
    \]
    \item For any $v \in \Im(\tilde B)$, $\Pi_I$ is the $\tilde{B}$-orthogonal projector onto the subspace $I$, we have
    \[
    (v - \Pi_I v)^\top B (v - \Pi_I v) \leq \tau v^\top \tilde B v.
    \]
\end{itemize}
\end{lemma}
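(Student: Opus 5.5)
The plan is to split every vector along the Euclidean-orthogonal decomposition $\mathbb{R}^m=\ker(\tilde B)\oplus\Im(\tilde B)$, which holds because $\tilde B$ is symmetric. Write $u=u_K+v$ with $u_K\in\ker(\tilde B)$ and $v\in\Im(\tilde B)$. Then $u^\top\tilde B u=v^\top\tilde B v$, so the right-hand side of the first statement sees only $v$. The goal is to show that the left-hand side also reduces to a quantity depending only on $v$, and that this quantity is the one appearing in the second statement.

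\textbf{Step 1: the kernel part is invisible to $B$.} Since $K\subseteq\ker(\tilde B)$ and $I\subseteq\Im(\tilde B)$, the two subspaces are Euclidean-orthogonal. Hence $\Pi=\Pi^{e}_K+\Pi^{e}_I$, where the superscript $e$ denotes Euclidean projectors. This gives
\[
u-\Pi u=(u_K-\Pi^e_K u_K)+(v-\Pi^e_I v).
\]
The first summand lies in $\ker(\tilde B)\cap K^{\perp}$. By the hypothesis $(\ker B\cap\ker\tilde B)^{\perp}\cap\ker\tilde B\subseteq K$, taking orthogonal complements inside $\ker(\tilde B)$ gives $\ker(\tilde B)\cap K^{\perp}\subseteq\ker(B)\cap\ker(\tilde B)$. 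Because $B$ is positive semidefinite, $Bw=0$ for such $w$, and all cross terms vanish. The first statement is therefore equivalent to
\[
(v-\Pi^e_I v)^\top B(v-\Pi^e_I v)\le\tau\, v^\top\tilde B v\qquad\text{for all } v\in\Im(\tilde B).
\]
Conversely, taking $u=v\in\Im(\tilde B)$ in the first statement recovers this inequality directly.

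\textbf{Step 2: coset invariance.} Both sides of the target inequality may be rewritten using the freedom $v\mapsto v+w$ with $w\in I$. The left-hand side $v-\Pi^e_I v$ is unchanged under this shift, so the reduced first statement holds for all $v$ if and only if it holds with the right-hand side replaced by
\[
\min_{w\in I}(v-w)^\top\tilde B(v-w)=(v-\Pi_I v)^\top\tilde B(v-\Pi_I v),
\]
where $\Pi_I$ is the $\tilde B$-orthogonal projector. The analogous invariance holds for the second statement, since $v-\Pi_I v$ is also unchanged under $v\mapsto v+w$. Both statements thus become inequalities posed on the quotient $\Im(\tilde B)/I$. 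On this quotient, the natural representative is the $\tilde B$-orthogonal complement $I^{\perp_{\tilde B}}\cap\Im(\tilde B)$, where $\Pi_I v=0$.

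\textbf{Main obstacle.} The remaining step is to identify the two left-hand sides, $v-\Pi^e_I v$ and $v-\Pi_I v$. They differ by an element of $I$, and $B$ need not vanish on $I$. Closing this gap is the hardest step, and I expect it is not automatic for arbitrary $I$. The first thing I would try is to exploit that $\Pi$ acting on $\Im(\tilde B)$ is understood, as in \cref{cond}, as the $\tilde B$-orthogonal projection onto $I$. With that reading, $\Pi$ coincides with $\Pi_K^e+\Pi_I$ on the splitting of Step 1. Step 1 then yields the equivalence verbatim, with no need to compare the two projections.

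If the Euclidean reading of $\Pi$ must be kept, I would instead look for an additional structural assumption under which $\Pi^e_I=\Pi_I$ on $\Im(\tilde B)$. One such assumption is that $I$ is spanned by generalized eigenvectors of $(B,\tilde B)$, as for $X_i$. Another is that $I$ is invariant under $\tilde B$, since then $\tilde B$-orthogonality and Euclidean orthogonality to $I$ agree inside $\Im(\tilde B)$. Under either assumption the argument closes.
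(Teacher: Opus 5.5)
Your Step~1 is the paper's argument, done more carefully. The paper also splits $u=v+w$ along $\Im(\tilde B)\oplus\ker(\tilde B)$ and shows that the kernel part of $u-\Pi u$ lies in $\ker(B)\cap\ker(\tilde B)$; its ``$\ker(\tilde B)\setminus K$'' should be read as your $\ker(\tilde B)\cap K^{\perp}$. Your Step~2 is correct but not needed.

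The paper then concludes ``$u_K=0$, hence $\Pi v=u_I=\Pi_I v$''. This is exactly the identification you isolated. The fact $u_K=0$ only yields $u_I=\Pi^{e}_I v$, and $u_I=\Pi_I v$ holds precisely under your reading (a). That is, $\Pi=\Pi^{e}_K+\Pi_I$ must be the orthogonal projector for the inner product $x^\top(\tilde B+Q)y$, where $Q$ is the Euclidean projector onto $\ker(\tilde B)$. This is also the projection the paper's subsequent theorem actually uses ($\Pi=TT^\top\tilde A$ on the image part). With that hypothesis stated explicitly, your argument is complete and coincides with the paper's.

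Your suspicion about the Euclidean reading is right: under it the lemma is false, so no argument can close it. Take $m=2$, $K=\{0\}$, $I=\Span\{e_1\}$, $q=(-\tfrac12,1)^\top$ and
\[ \tilde B=\begin{pmatrix}1&\tfrac12\\ \tfrac12&1\end{pmatrix},\qquad B=\begin{pmatrix}2&1\\ 1&1\end{pmatrix}. \]
Then $Be_1=2\tilde Be_1$ and $Bq=\tfrac23\tilde Bq$, so $I$ is spanned by the top generalized eigenvector, exactly as for $X_i$.

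Since $v-\Pi_I v=v_2q$, we get $(v-\Pi_I v)^\top B(v-\Pi_I v)=\tfrac23\|v-\Pi_I v\|_{\tilde B}^2\le\tfrac23\,v^\top\tilde Bv$. Hence the second statement holds with $\tau=1$. But for the Euclidean projector $\Pi=e_1e_1^\top$ and $u=q$, we get $(u-\Pi u)^\top B(u-\Pi u)=e_2^\top Be_2=1>\tfrac34=u^\top\tilde Bu$, so the first statement fails.

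This example also refutes your first fallback. Spanning $I$ by generalized eigenvectors of $(B,\tilde B)$ neither forces $\Pi^{e}_I=\Pi_I$ on $\Im(\tilde B)$ nor rescues the equivalence. Only your second condition works: in fact $\Pi^{e}_I=\Pi_I$ on $\Im(\tilde B)$ holds if and only if $\tilde B I\subseteq I$. So drop the eigenvector alternative, and state reading (a), or $\tilde B$-invariance of $I$, as the hypothesis.
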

\begin{proof}
    Because \(\tilde{B}\) is symmetric positive semidefinite matrix, we have the orthogonal decomposition
    \[\Im(\tilde B) \oplus \ker(\tilde B) =   \mathbb{R}^{m}.\]
    \begin{itemize}
        \item [\((\Rightarrow)\)] Assume the first statement holds.
        Let \( v \in \Im(\tilde B) \subset \mathbb{R}^{m} \). Then \( v \) satisfies
        \[
        (v - \Pi v)^\top B (v - \Pi v) \leq \tau v^\top \tilde B v.
        \]
       By the definition of \( \Pi \) and \( S = K \oplus I \), the projection \( \Pi v \) admits a unique decomposition
       \[
       \Pi v = u_K + u_I, \quad \text{with } u_K \in K,\ u_I \in I.
       \]
       Since \( K \subseteq \ker(\tilde B) \), we have \( u_K \in \ker(\tilde B) \). But \( \Im(\tilde B) \perp \ker(\tilde B) \) and $v \in \Im(\tilde B)$, so \( u_K = 0 \). Hence, \( \Pi v = u_I = \Pi_I v\). We obtain
       \[
       (v - \Pi_I v)^\top B (v - \Pi_I v) \leq \tau v^\top \tilde B v.
       \]  
        \item[\((\Leftarrow)\)] Assume the second statement holds. Let \( u \in \mathbb{R}^{m} \). Write \( u = v + w \), where \( v \in \Im(\tilde B) \), \( w \in \ker(\tilde B) \). By assumption,
        \[
        (v - \Pi_I v)^\top B (v - \Pi_I v) \leq \tau v^\top \tilde B v.
        \]
        Due to \((\ker(B)\cap \ker(\tilde B))^{\perp} \cap \ker(\tilde B) = \ker^{\perp}(B)\cap\ker(\tilde B)\), we have \(\ker^{\perp}(B)\cap\ker(\tilde B) \subset K \subset \ker(\tilde B)\). By the definition of \( \Pi \) and $w \in \ker(\tilde B)$, we can get \(w - \Pi w \in \ker(\tilde B) \setminus K \subset \ker(\tilde B) \setminus (\ker^{\perp}(B)\cap\ker(\tilde B)) = \ker(\tilde B) \cap \ker(B)\). hence
        \[
        (u - \Pi u)^\top B (u - \Pi u) = (v - \Pi v)^\top B (v - \Pi v).
        \]
        Due to \(\Pi v = \Pi_I v\), 
        \[
        (u - \Pi u)^\top B (u - \Pi u) = (v - \Pi_I v)^\top B (v - \Pi_I v) \leq \tau u^\top \tilde B u.
        \]
    \end{itemize}
\end{proof}

\begin{theorem}
    Let $M_{AMS}^{-1}$ is the two-level Schwarz preconditioner with local generalized multiscale coarse spaces $X_i$, $\hat{X}_i$ and local coarse space $\hat{S}_i$ defined by
    \begin{itemize}
        \item $X_{i} = \Span \CurlyBrackets*{x_{i, 1}, x_{i, 2}, \dots, x_{i,n^c}}$, which is the subspace spanned by the eigenvectors, corresponding to the $n^c$ largest eigenvalues solved by \cref{eq:a_i_func}.
        \item $\hat{X}_{i} = \Span \CurlyBrackets*{\hat{x}_{i, 1},\ \hat{x}_{i, 2}, \dots, \hat{x}_{i,n^c}}$.
        \item $\hat{S}_{i} = ((\ker(D_i A_{i} D_i) \cap \ker(\tilde{A}_{i}))^{\perp} \cap \ker(\tilde{A}_{i}))\oplus \hat{X}_i$, where $A_i,D_i,\tilde{A_i}$ are the local operator for subdomian $\Omega_i^\delta$.
    \end{itemize}
    Then the condition number of matrix preconditioned by \cref{ams} can be bounded by
\begin{equation}\label{bound}
    \kappa_2(M_{AMS}^{-1} A) \leq (k_c + 1)\left(2 + (2k_c + 1) k_m \max_i (2\tau_i + 4 M_i\Dist^2_{\tilde{A}_i}(\hat{X_i}, X_i))\right)
\end{equation}
where $\tau_i$ is the $n^c$-th largest eigenvalue of the relevant local spectral problem, $M_i$ is the strictly positive real number such that $\norm{\cdot}^{2}_{D_iA_iD_i}\leq M_i\norm{\cdot}^{2}_{\tilde{A}_i}$ as introduced in \cite{ADDThm}, $k_c$ and $k_m$ are defined in \cref{cond}.
\end{theorem}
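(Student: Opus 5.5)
The plan is to reduce the theorem to \cref{cond} by exhibiting, for each subdomain, a constant $\hat\tau_i \le 2\tau_i + 4M_i\Dist^2_{\tilde{A}_i}(\hat X_i,X_i)$ for which the stable-splitting inequality of \cref{cond} holds with $\Pi_i$ the orthogonal projector onto $\hat S_i$. Substituting $\hat\tau_i$ into \cref{bound} then gives the claim. Set $B=D_iA_iD_i$, $\tilde B=\tilde A_i$, $K$ the prescribed kernel piece and $I=\hat X_i\subseteq \Im(\tilde A_i)$. I will assume the network output is projected onto $\Im(\tilde A_i)$, which is harmless since $\Pi_i$ is applied in the loss setting. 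By \cref{conddep}, it suffices to prove, for every $v\in\Im(\tilde A_i)$,
\[
(v-\hat\Pi v)^\top B (v-\hat\Pi v)\le \hat\tau_i\, v^\top\tilde A_i v ,
\]
where $\hat\Pi$ is the $\tilde A_i$-orthogonal projector onto $\hat X_i$. I also use $\Pi$ for the $\tilde A_i$-orthogonal projector onto $X_i$.

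\textbf{Splitting the error.} Write
\[
v-\hat\Pi v=(v-\Pi v)+(\Pi v-\hat\Pi v)
\]
and apply $\norm{a+b}_B^2\le 2\norm{a}_B^2+2\norm{b}_B^2$. For the first term, the eigenvectors of \cref{eq:a_i_func} restricted to $\Im(\tilde A_i)$ form a $\tilde A_i$-orthogonal basis. The component $v-\Pi v$ lies in the span of the eigenvectors with eigenvalues $\le\tau_i$, so $\norm{v-\Pi v}_B^2\le \tau_i\norm{v-\Pi v}^2_{\tilde A_i}\le\tau_i\norm{v}_{\tilde A_i}^2$. Here I identify $a_i$ with $B$ on $\Im(\tilde A_i)$ through the projection in the definition of $a_i$. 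This yields the $2\tau_i$ contribution.

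\textbf{The perturbation term (main obstacle).} For the second term, use $\norm{\cdot}_B^2\le M_i\norm{\cdot}_{\tilde A_i}^2$. It then remains to show
\[
\norm{(\Pi-\hat\Pi)v}_{\tilde A_i}^2\le 2\,\Dist^2_{\tilde A_i}(\hat X_i,X_i)\,\norm{v}^2_{\tilde A_i}.
\]
I work in the $\tilde A_i$ inner product on $\Im(\tilde A_i)$, where it is a genuine inner product. Let $Y,\hat Y$ be orthonormal bases, so $\Pi=YY^\top\tilde A_i$, and similarly for $\hat\Pi$. Then
\[
\norm{\Pi-\hat\Pi}_{\mathrm{op}}^2\le\norm{\Pi-\hat\Pi}_{HS}^2=\operatorname{tr}\Pi+\operatorname{tr}\hat\Pi-2\operatorname{tr}(\Pi\hat\Pi)=2n^c-2\norm{Y^\top\tilde A_i\hat Y}_F^2=2\Dist^2_{\tilde A_i}.
\]
These are the Hilbert–Schmidt norms of projectors in the $\tilde A_i$-geometry. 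Combined with the factor $2$ from the splitting, this gives $4M_i\Dist^2$. The delicate point is justifying the trace identity in the weighted inner product and checking that the projector onto $\hat X_i$ from \cref{conddep} is exactly $\hat\Pi$.

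\textbf{Conclusion.} We obtain $\hat\tau_i=2\tau_i+4M_i\Dist^2_{\tilde A_i}(\hat X_i,X_i)$, which is strictly positive. \cref{conddep} lifts the estimate to all $u\in\mathbb R^{N_i}$ with the orthogonal projector onto $\hat S_i$. \cref{cond}, applied with $K_i$ the kernel piece and $I_i=\hat X_i$, then delivers \cref{bound}.
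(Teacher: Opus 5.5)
Your proposal is correct and follows the paper's proof essentially step for step. Both use the same factor-two splitting $v-\hat\Pi v=(v-\Pi v)+(\Pi v-\hat\Pi v)$, the $\tau_i$ bound for the exact eigenspace, the norm equivalence with $M_i$, and a bound on the projector difference by its Frobenius norm, which equals $2\Dist^2_{\tilde A_i}$; the paper writes this last step with the Euclidean projectors $\tilde A^{1/2}T(\tilde A^{1/2}T)^\top$, which is the same computation as your $\tilde A_i$-weighted Hilbert--Schmidt norm, and your explicit reduction via \cref{conddep} and the assumption $\hat X_i\subseteq\Im(\tilde A_i)$ only make visible what the paper leaves implicit.
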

\begin{proof}
    For clarity, we omit the subdomain index $i$. 
    
    \paragraph{Notation and Projections}
    Let $T, \hat{T} \in \mathbb{R}^{n \times n^c}$ be matrices whose columns form $\tilde{A}$-orthonormal bases for the coarse spaces $X$ and $\hat{X}$, respectively. The $\tilde{A}$-orthogonal projections onto these spaces are given by $\Pi = T T^\top \tilde{A}$ and $\hat{\Pi} = \hat{T} \hat{T}^\top \tilde{A}$.
    We also define the associated Euclidean orthogonal projections in the transformed space:
    \[
        \Pi_0 = \tilde{A}^{1/2} T (\tilde{A}^{1/2} T)^\top, \quad \hat{\Pi}_0 = \tilde{A}^{1/2} \hat{T} (\tilde{A}^{1/2} \hat{T})^\top.
    \]
    Note that $\Pi_0$ and $\hat{\Pi}_0$ are symmetric idempotent matrices with rank $n^c$, satisfying $\tilde{A}^{1/2} \Pi = \Pi_0 \tilde{A}^{1/2}$ and $\tilde{A}^{1/2} \hat{\Pi} = \hat{\Pi}_0 \tilde{A}^{1/2}$.

    \paragraph{Error Decomposition.}
    We aim to bound the energy norm of the error $v - \hat{\Pi}v$. 
    By the triangle inequality, we have:
    \begin{equation}\label{eq:decomp}
        \|v - \hat{\Pi}v\|_{D A D}^2 \leq 2 \|v - \Pi v\|_{D A D}^2 + 2 \|\Pi v - \hat{\Pi}v\|_{D A D}^2.
    \end{equation}
    The first term represents the approximation error of the exact coarse space. By Assumption \cref{cond}, it is bounded by:
    \begin{equation}\label{eq:term1}
        \|v - \Pi v\|_{D A D}^2 \leq \tau \|v\|_{\tilde{A}}^2.
    \end{equation}

    \paragraph{Bound on Perturbation Error}
    For the second term in \cref{eq:decomp}, we utilize the norm equivalence $\| \cdot \|_{D A D}^2 \leq M \| \cdot \|_{\tilde{A}}^2$. Let $v_0 = \tilde{A}^{1/2} v$, then $\|v\|_{\tilde{A}} = \|v_0\|_2$. We derive:
    \begin{equation}\label{eq:term2_pre}
    \begin{aligned}
        \|\Pi v - \hat{\Pi}v\|_{D A D}^2 
        &\leq M \|\Pi v - \hat{\Pi}v\|_{\tilde{A}}^2 \\
        &= M \| \tilde{A}^{1/2} (\Pi - \hat{\Pi}) v \|_2^2 \\
        &= M \| (\Pi_0 - \hat{\Pi}_0) v_0 \|_2^2 \\
        &\leq M \|\Pi_0 - \hat{\Pi}_0\|_2^2 \, \|v\|_{\tilde{A}}^2.
    \end{aligned}
    \end{equation}
    
    \paragraph{Relation to Subspace Distance}
    It remains to bound the spectral norm $\|\Pi_0 - \hat{\Pi}_0\|_2$. We consider the Frobenius norm squared, utilizing the property that for any projection matrix $P$, $\operatorname{tr}(P^2) = \operatorname{tr}(P) = \operatorname{rank}(P)$:
    \[
    \begin{aligned}
        \|\Pi_0 - \hat{\Pi}_0\|_F^2 
        &= \operatorname{tr}(\Pi_0^2) + \operatorname{tr}(\hat{\Pi}_0^2) - 2\operatorname{tr}(\Pi_0 \hat{\Pi}_0) \\
        &= 2 n^c - 2\operatorname{tr}\left( (\tilde{A}^{1/2} T) (\tilde{A}^{1/2} T)^\top (\tilde{A}^{1/2} \hat{T}) (\tilde{A}^{1/2} \hat{T})^\top \right).
    \end{aligned}
    \]
    Using the cyclic property of the trace, $\operatorname{tr}(AB) = \operatorname{tr}(BA)$, the last term simplifies to:
    \[
        \operatorname{tr}\left( T^\top \tilde{A} \hat{T} \hat{T}^\top \tilde{A} T \right) = \| \hat{T}^\top \tilde{A} T \|_F^2.
    \]
    Thus, by the definition of the distance between subspaces in the $\tilde{A}$-inner product:
    \begin{equation}\label{eq:dist_rel}
        \|\Pi_0 - \hat{\Pi}_0\|_2^2 \leq \|\Pi_0 - \hat{\Pi}_0\|_F^2 = 2 \left( n^c - \| \hat{T}^\top \tilde{A} T \|_F^2 \right) = 2 \Dist_{\tilde{A}}^2(X, \hat{X}).
    \end{equation}
    
    \paragraph{Conclusion.}
    Substituting \cref{eq:term1}, \cref{eq:term2_pre}, and \cref{eq:dist_rel} back into \cref{eq:decomp}, we obtain:
    \[
        \|v - \hat{\Pi}v\|_{D A D}^2 \leq \left( 2\tau + 4 M \Dist_{\tilde{A}}^2(X, \hat{X}) \right) \|v\|_{\tilde{A}}^2.
    \]
    This completes the proof.
\end{proof}

\section{Numerical experiments}\label{nume}

All neural network training is conducted on a single NVIDIA RTX 5880 Ada Generation GPU, while the remaining iterative solving processes are performed on an Intel Core i9-12900 CPU.
This configuration is primarily adopted due to the relatively small scale of the cases, which limits the full utilization of the GPU's high parallel computing power. 
Additionally, this approach helps reduce unnecessary energy consumption and improves resource efficiency.
The neural network training is implemented in PyTorch, whereas the iterative solver relies on SciPy's sparse matrix routines.

\subsection{Multi-scale basis approximation experiments}
To facilitate efficient training without incurring the prohibitive costs of repeated PDE discretizations, we employ a synthetic algebraic data generation strategy designed to emulate the spectral and structural properties of discretized elliptic operators. 
We construct a dataset of random sparse weighted graphs, where the sparsity—defined by the number of nonzeros (NNZ) relative to the number of vertices $N$—is constrained to the interval $[3N, 7N]$. 
This range is rigorously selected to replicate the connectivity patterns characteristic of standard five-point (2D) and seven-point (3D) finite difference stencils.
Consistent with the domain decomposition strategy utilized in our preconditioner, we apply an overlapping graph partitioning scheme to these global graphs. 
This process yields a collection of weighted subgraphs $G_i=(V_i, E_i)$, with the partitioner tuned to enforce an average subdomain size of approximately \(2000\) vertices, consistent with the typical local problem dimension in our target large-scale applications. 
In total, we generate \(10000\) subdomain graphs, of which \(8000\) are used for training and the remaining \(2000\) are reserved for validation.

We systematically evaluate the SP-LPMA GUNet architecture (detailed in \cref{fig:Global_Arch}) to determine the optimal network depth. Specifically, we vary the internal depth of the Graph U-Net blocks (\cref{fig:GUnet_Arch}) while maintaining invariant architectural components and training protocols to ensure a controlled ablation study. The training dynamics, including loss trajectories and learning rate schedules, are visualized in \cref{fig:two-level 200,fig:three-level 200,fig:four-level 200,fig:four-level 300}. The convergence dynamics vary significantly with the hierarchical depth of the architecture. Within a baseline of 200 epochs, the two- and three-level models demonstrate robust convergence, with the validation loss reaching a steady asymptote. In contrast, the four-level model exhibits a slower convergence rate, failing to fully converge at 200 epochs (\cref{fig:four-level 200}). Although extending the training to 300 epochs allows the four-level configuration to stabilize (\cref{fig:four-level 300}), the resulting reduction in approximation error is marginal compared to the three-level model. This indicates that the deeper architecture complicates the optimization landscape without yielding a commensurate gain in expressive power for the target physical system. Consequently, the three-level model provides an optimal trade-off between numerical accuracy and computational efficiency, and is thus adopted for all subsequent simulations.

\begin{figure}[!ht]
    \centering
    \resizebox{\textwidth}{!}{
    \input{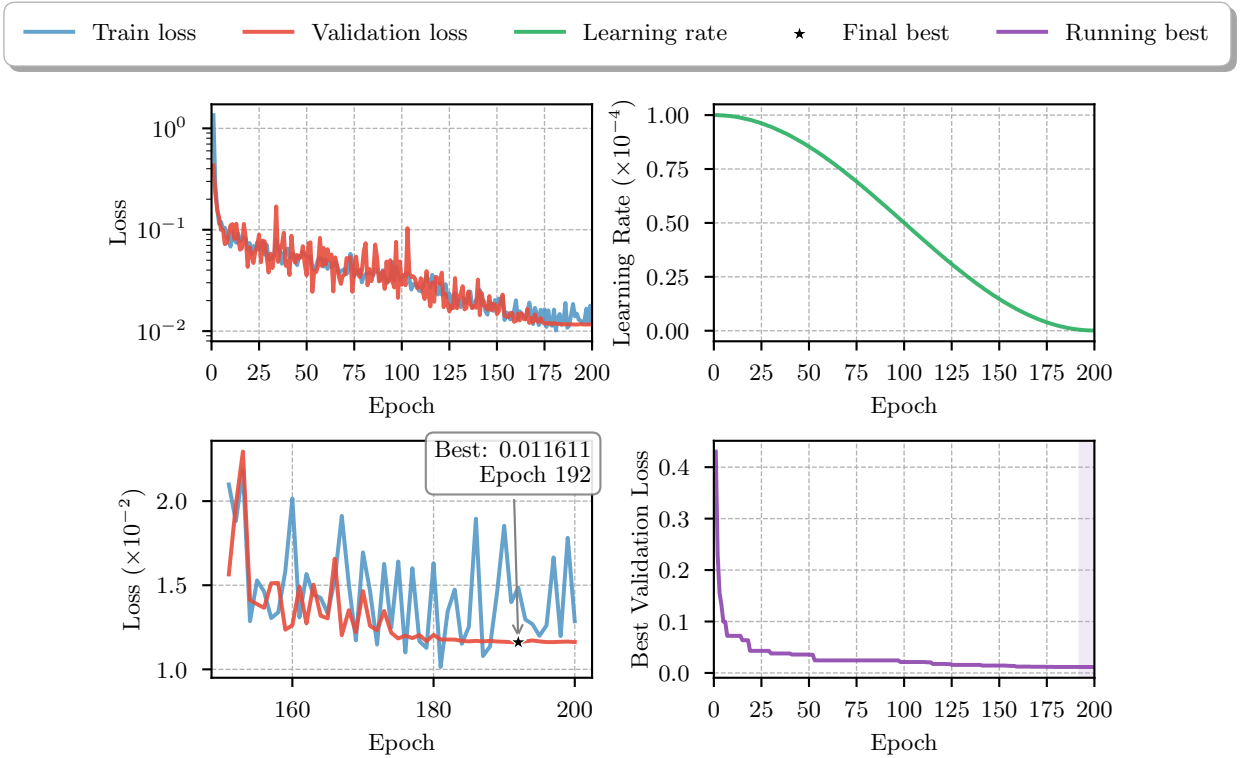}}
    \caption{Training results for the two level SP-LPMA GUNet over 200 epochs.}
    \label{fig:two-level 200}
\end{figure}

\begin{figure}[!ht]
    \centering
    \resizebox{\textwidth}{!}{
    \input{figures/training_dynamics_2.pgf}}
    \caption{Training results for the three level SP-LPMA GUNet over 200 epochs.}
    \label{fig:three-level 200}
\end{figure}

\begin{figure}[!ht]
    \centering
    \resizebox{\textwidth}{!}{
    \input{figures/training_dynamics_3_epoch200.pgf}}
    \caption{Training results for the four level SP-LPMA GUNet over 200 epochs.}
    \label{fig:four-level 200}
\end{figure}

\begin{figure}[!ht]
    \centering
    \resizebox{\textwidth}{!}{
    \input{figures/training_dynamics_3.pgf}}
    \caption{Training results for the four level SP-LPMA GUNet over 300 epochs.}
    \label{fig:four-level 300}
\end{figure}

To validate the approximation fidelity of the selected three-level model, we conduct a qualitative comparison between the predicted multiscale basis functions and the reference bases derived from exact local generalized eigenvalue problems. \Cref{fig:basis_comparison} illustrates this comparison on a representative graph subdomain. The visual agreement confirms that the learned model effectively captures the dominant spectral modes required for the construction of the coarse space.
\begin{figure}[!ht]
  \centering
  \includegraphics[width=\textwidth]{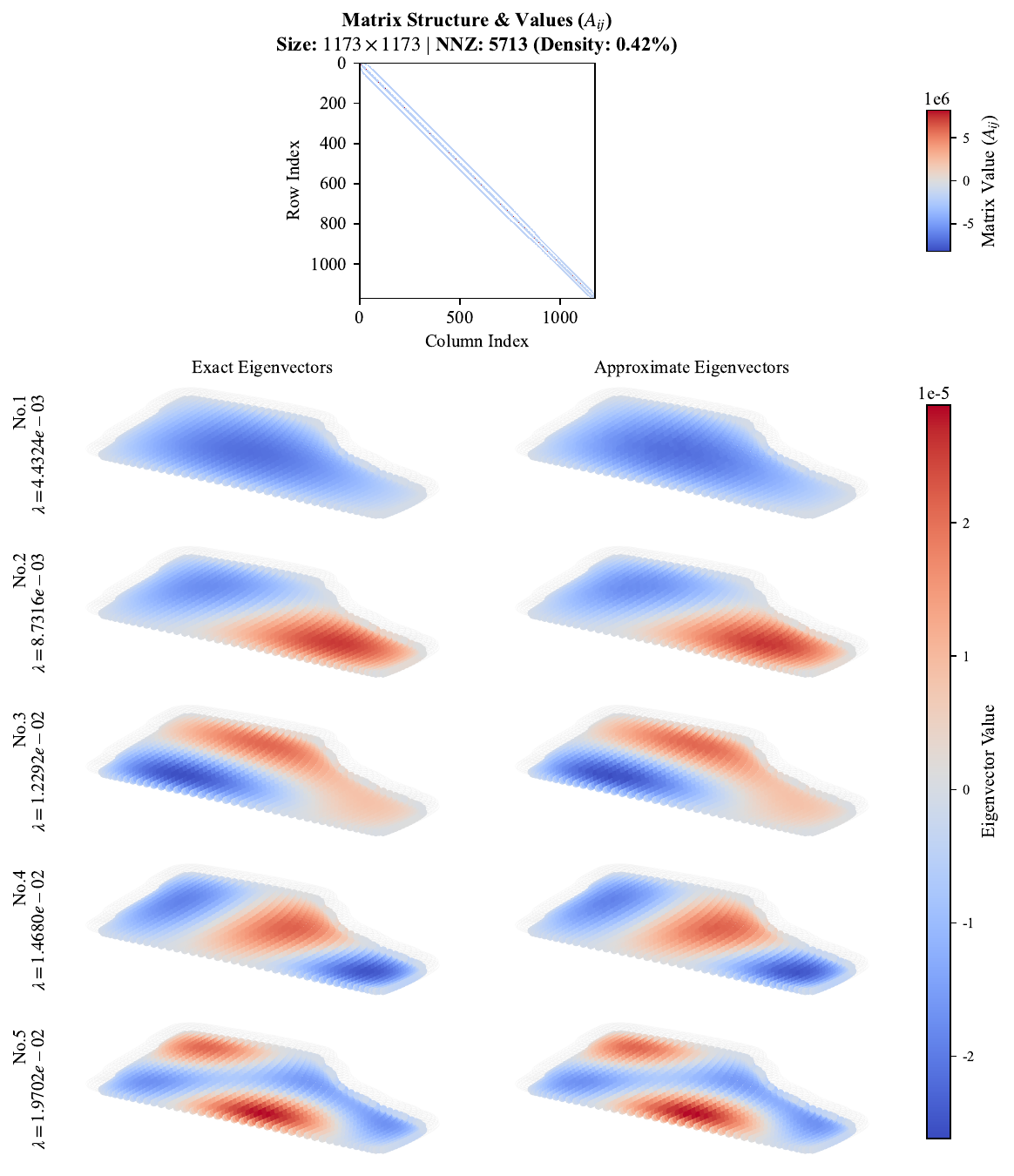}
  \caption{Qualitative evaluation of multiscale basis functions approximation on a representative graph subdomain.}
  \label{fig:basis_comparison}
\end{figure}

\subsection{Preconditioner comparison}
\label{subsec:precond_comparison}

We assess the proposed learning-based multiscale basis functions by analyzing the convergence of a two-level additive Schwarz preconditioner for the Darcy flow problem with highly heterogeneous permeability. Let $\Omega \subset \mathbb{R}^d$ ($d \in {2, 3}$) be a bounded Lipschitz domain with boundary $\partial\Omega = \Gamma_D \cup \Gamma_N$ ($\Gamma_D \cap \Gamma_N = \emptyset$). The mixed boundary value problem is given by \cref{eq:original_equation}.

Discretization is performed using a cell-centered finite volume method on structured Cartesian grids. 
A velocity-elimination procedure based on trapezoidal quadrature \cite{vet} reduces the system to a pressure-only Two-Point Flux Approximation (TPFA) scheme. 
Interface transmissibilities are computed via harmonic averaging to accommodate the heterogeneity of $\kappa(\bm{x})$. 
Dirichlet and Neumann conditions are enforced via numerical flux modifications and prescribed boundary fluxes, respectively. 
This formulation yields a sparse linear system $Ap=b$, where $A$ is symmetric positive definite (SPD) provided $\kappa$ is scalar and $\Gamma_D$ has non-zero measure.

\subsubsection{2D experiments}
\label{subsubsec:2d}

We consider steady Darcy flow on the unit square \(\Omega=(0,1)^2\), discretized on a uniform \(2048\times 2048\) grid. 
To assess robustness and out-of-distribution generalization with respect to boundary forcing and medium complexity, we test two orthogonal boundary configurations and two permeability families.

\paragraph{Boundary configurations}
We denote by \(\mathcal{C}_1\) and \(\mathcal{C}_2\) two Dirichlet--Neumann decompositions inducing flow along the coordinate axes:
\begin{itemize}
  \item \textbf{Configuration \(\mathcal{C}_1\) (flow in \(y\)-direction).}
  Dirichlet conditions are imposed on \(\{y=0\}\cup\{y=1\}\) with \(p_D=0\) at \(y=0\) and \(p_D=1\) at \(y=1\); homogeneous Neumann conditions (\(g=0\)) are applied on \(\{x=0\}\cup\{x=1\}\).
  \item \textbf{Configuration \(\mathcal{C}_2\) (flow in \(x\)-direction).}
  Dirichlet conditions are imposed on \(\{x=0\}\cup\{x=1\}\) with \(p_D=1\) at \(x=0\) and \(p_D=0\) at \(x=1\); homogeneous Neumann conditions (\(g=0\)) are applied on \(\{y=0\}\cup\{y=1\}\).
\end{itemize}

\paragraph{Permeability family I: log-normal random fields.}
We generate continuous heterogeneous media by setting \(\kappa(\mathbf{x})=\exp(Z(\mathbf{x}))\), where \(Z\) is a zero-mean Gaussian random field with anisotropic exponential covariance
\[
C_Z(\mathbf{x},\mathbf{y})
=\sigma^2\exp\!\left(
-\sqrt{\frac{(x_1-y_1)^2}{\eta_1^2}+\frac{(x_2-y_2)^2}{\eta_2^2}}
\right),
\]
with \(\sigma^2=2\) and correlation lengths \(\eta_1,\eta_2\).
\begin{figure}[!ht]
  \centering
  \includegraphics[width=\textwidth]{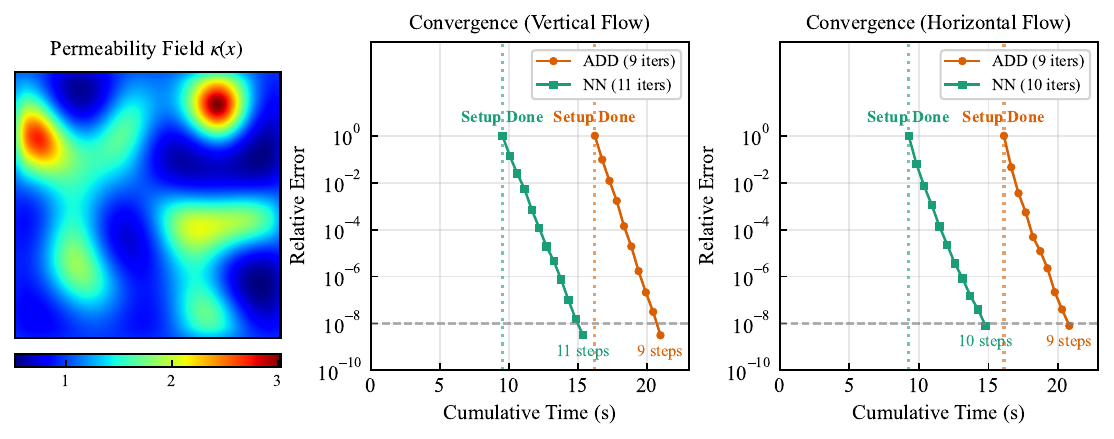}
  \caption{Performance on a 2D heterogeneous log-normal permeability field.
\textbf{Left:} permeability realization $\kappa(\mathbf{x})$ generated from a Gaussian random field $Z=\log\kappa$ with exponential covariance (Section~\ref{subsubsec:2d}) on a $2048\times 2048$ mesh.
\textbf{Middle/Right:} time-to-accuracy comparison between ADD and NN for $\mathcal{C}_1$ (vertical flow) and $\mathcal{C}_2$ (horizontal flow).
Curves report relative error versus cumulative time (setup + solve); dotted lines mark the end of setup and the dashed line denotes the target tolerance $10^{-8}$.}
  \label{fig:res_RF 2D}
\end{figure}
Figure~\ref{fig:res_RF 2D} reports time-to-accuracy curves under \(\mathcal{C}_1\) and \(\mathcal{C}_2\), together with a representative permeability realization.

\paragraph{Permeability family II: high-permeability channels (DFN-like).}
We consider discontinuous, high-contrast media with a background matrix permeability \(\kappa_m=1\) and \(N_c\) randomly placed channels, where \(N_c\sim \mathrm{Unif}\{8,\ldots,20\}\).
Each channel is a line segment with length \(L\sim \mathrm{Unif}[300,1000]\) (grid units) and width \(w\sim \mathrm{Unif}[3,8]\), assigned permeability \(\kappa_c\), while the remaining region keeps \(\kappa_m\).
We test two contrast ratios to quantify robustness as discontinuities and preferential paths become more severe.
\begin{figure}[!ht]
  \centering
  \includegraphics[width=\textwidth]{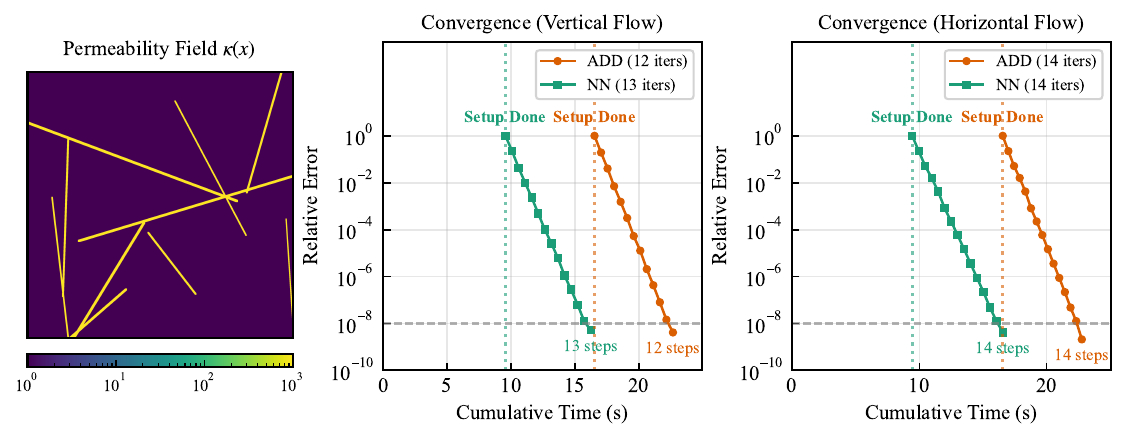}
  \caption{Performance on a 2D high-contrast discrete fracture network (DFN).
\textbf{Left:} Permeability realization $\kappa(\mathbf{x})$ featuring randomly distributed high-conductivity fractures (channels with $\kappa=10^3$) embedded in a homogeneous background matrix on a $2048\times 2048$ mesh.
\textbf{Middle/Right:} Time-to-accuracy comparison between ADD and NN for $\mathcal{C}_1$ (vertical flow) and $\mathcal{C}_2$ (horizontal flow).
Curves report relative error versus cumulative time (setup + solve); dotted lines mark the end of setup and the dashed line denotes the target tolerance $10^{-8}$.}
  \label{fig:res_RB 1e3 2D}
\end{figure}

\begin{figure}[!ht]
  \centering
  \includegraphics[width=\textwidth]{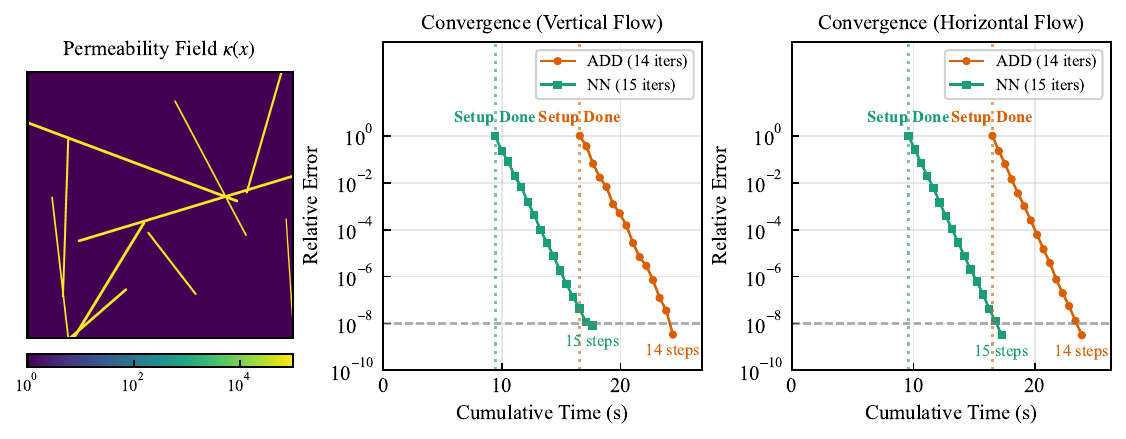}
  \caption{Performance on a 2D high-contrast discrete fracture network (DFN).
\textbf{Left:} Permeability realization $\kappa(\mathbf{x})$ featuring randomly distributed high-conductivity fractures (channels with $\kappa=10^5$) embedded in a homogeneous background matrix on a $2048\times 2048$ mesh.
\textbf{Middle/Right:} Time-to-accuracy comparison between ADD and NN for $\mathcal{C}_1$ (vertical flow) and $\mathcal{C}_2$ (horizontal flow).
Curves report relative error versus cumulative time (setup + solve); dotted lines mark the end of setup and the dashed line denotes the target tolerance $10^{-8}$.}
  \label{fig:res_RB 1e5 2D}
\end{figure}
Results are shown in Figure~\ref{fig:res_RB 1e3 2D} for \(\kappa_c=10^3\) and in Figure~\ref{fig:res_RB 1e5 2D} for \(\kappa_c=10^5\), each under \(\mathcal{C}_1\) and \(\mathcal{C}_2\).

\subsubsection{3D experiments}
\label{subsubsec:3d}

We next consider \(\Omega=(0,1)^3\) on a \(256\times256\times256\) grid, using the same evaluation protocol: two orthogonal boundary configurations and the same two permeability families extended to three dimensions.

\paragraph{Boundary configurations}
Let \(\partial\Omega=\Gamma_D^{(k)}\cup\Gamma_N^{(k)}\) denote the Dirichlet--Neumann decomposition for configuration \(\mathcal{C}_k\), \(k\in\{1,2\}\):
\begin{itemize}
  \item \textbf{Configuration \(\mathcal{C}_1\) (flow in \(y\)-direction).}
  \[
    \Gamma_D^{(1)}=[0,1]\times\{0,1\}\times[0,1],
    \qquad
    \Gamma_N^{(1)}=\partial\Omega\setminus \Gamma_D^{(1)}.
  \]
  We impose \(g=0\) on \(\Gamma_N^{(1)}\) and set \(p_D=0\) at \(y=0\), \(p_D=1\) at \(y=1\).
  \item \textbf{Configuration \(\mathcal{C}_2\) (flow in \(x\)-direction).}
  \[
    \Gamma_D^{(2)}=\{0,1\}\times[0,1]\times[0,1],
    \qquad
    \Gamma_N^{(2)}=\partial\Omega\setminus \Gamma_D^{(2)}.
  \]
  We impose \(g=0\) on \(\Gamma_N^{(2)}\) and set \(p_D=1\) at \(x=0\), \(p_D=0\) at \(x=1\).
\end{itemize}

\paragraph{Permeability family I: log-normal random fields}
We sample \(\kappa(\mathbf{x})=\exp(Z(\mathbf{x}))\) with zero-mean GRF \(Z\) and 3D anisotropic exponential covariance
\[
C_Z(\mathbf{x},\mathbf{y})
=\sigma^2\exp\!\left(
-\sqrt{
\frac{(x_1-y_1)^2}{\eta_1^2}+
\frac{(x_2-y_2)^2}{\eta_2^2}+
\frac{(x_3-y_3)^2}{\eta_3^2}}
\right),
\]
where \(\sigma^2=2\) and \(\eta_1,\eta_2,\eta_3\) are correlation lengths.
\begin{figure}[!ht]
  \centering
  \includegraphics[width=\textwidth]{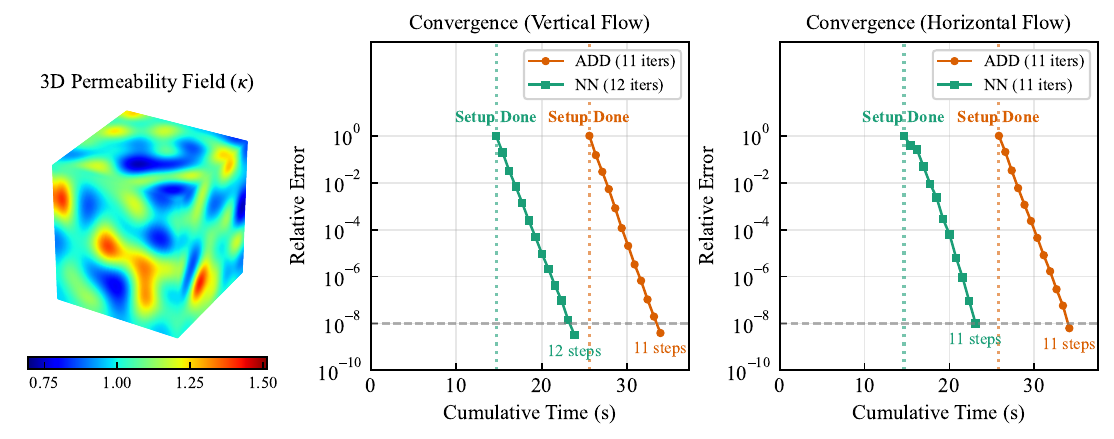}
  \caption{Performance on a 3D heterogeneous log-normal permeability field.
\textbf{Left:} permeability realization $\kappa(\mathbf{x})$ generated from a Gaussian random field $Z=\log\kappa$ with exponential covariance (Section~\ref{subsubsec:3d}) on a $256 \times 256 \times 256$ mesh.
\textbf{Middle/Right:} time-to-accuracy comparison between ADD and NN for $\mathcal{C}_1$ (vertical flow) and $\mathcal{C}_2$ (horizontal flow).
Curves report relative error versus cumulative time (setup + solve); dotted lines mark the end of setup and the dashed line denotes the target tolerance $10^{-8}$.}
  \label{fig:res_RF 3D}
\end{figure}
Figure~\ref{fig:res_RF 3D} summarizes the corresponding time-to-accuracy results under \(\mathcal{C}_1\) and \(\mathcal{C}_2\).

\paragraph{Permeability family II: high-permeability channels (3D)}
We construct discontinuous 3D channelized media with \(\kappa_m=1\) and randomly distributed tubular channels of permeability \(\kappa_c\).
We sample \(N_c\sim \mathrm{Unif}\{6,\ldots,15\}\); each channel starts from a random line segment of length \(L\sim \mathrm{Unif}[80,200]\), then is dilated to a tube with radius \(r\sim \mathrm{Unif}[2,5]\).
We again test \(\kappa_c\in\{10^3,\,10^5\}\).
\begin{figure}[!ht]
  \centering
  \includegraphics[width=\textwidth]{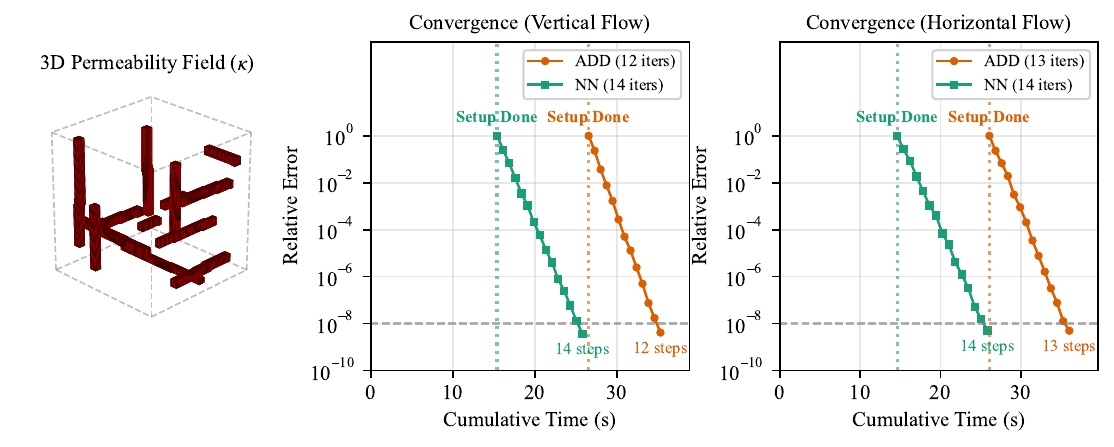}
  \caption{Performance on a 3D high-contrast discrete fracture network (DFN).
\textbf{Left:} Permeability realization $\kappa(\mathbf{x})$ featuring randomly distributed high-conductivity fractures (channels with $\kappa=10^3$) embedded in a homogeneous background matrix on a $256\times 256\times256$ mesh.
\textbf{Middle/Right:} Time-to-accuracy comparison between ADD and NN for $\mathcal{C}_1$ (vertical flow) and $\mathcal{C}_2$ (horizontal flow).
Curves report relative error versus cumulative time (setup + solve); dotted lines mark the end of setup and the dashed line denotes the target tolerance $10^{-8}$.}
  \label{fig:res_RB 1e3 3D}
\end{figure}

\begin{figure}[!ht]
  \centering
  \includegraphics[width=\textwidth]{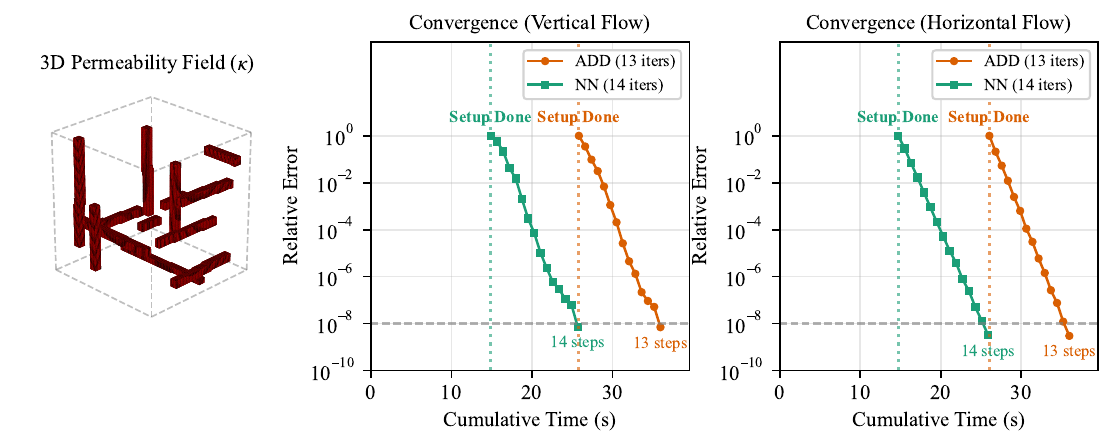}
  \caption{Performance on a 3D high-contrast discrete fracture network (DFN).
\textbf{Left:} Permeability realization $\kappa(\mathbf{x})$ featuring randomly distributed high-conductivity fractures (channels with $\kappa=10^5$) embedded in a homogeneous background matrix on a $256\times 256\times256$ mesh.
\textbf{Middle/Right:} Time-to-accuracy comparison between ADD and NN for $\mathcal{C}_1$ (vertical flow) and $\mathcal{C}_2$ (horizontal flow).
Curves report relative error versus cumulative time (setup + solve); dotted lines mark the end of setup and the dashed line denotes the target tolerance $10^{-8}$.}
  \label{fig:res_RB 1e5 3D}
\end{figure}
Time-to-accuracy curves are reported in Figure~\ref{fig:res_RB 1e3 3D} (\(\kappa_c=10^3\)) and Figure~\ref{fig:res_RB 1e5 3D} (\(\kappa_c=10^5\)), under both \(\mathcal{C}_1\) and \(\mathcal{C}_2\).

For each case, although the NN-accelerated basis represents an approximate multiscale subspace and can lead to a mildly larger iteration count, the preconditioner setup is significantly faster, resulting in consistently improved end-to-end time-to-accuracy.

\section{Conclusion}\label{con}

We developed a purely algebraic, learning-based coarse-space construction for two-level overlapping Schwarz preconditioning of high-contrast Darcy systems.
The key idea is to replace the repeated solution of local generalized eigenproblems during setup by a graph neural network that predicts local coarse spaces from the system-matrix graph.
On the theoretical side, we introduced a coefficient-weighted subspace-distance measure and established a condition-number bound for the resulting preconditioned operator in terms of this distance.
This result provides a principled objective for supervised training and connects the learning error to solver performance.
In our experiments, the training objective converged to a final loss of \(9.4\times 10^{-3}\).
Across the considered permeability contrasts and mixed boundary conditions, the proposed approach consistently reduces the dominant setup cost.
In both 2D and 3D, the setup time is typically reduced by about \(75\%\)–\(80\%\), which translates into an overall end-to-end time-to-solution reduction of about \(25\%\)–\(30\%\).
While the learned coarse space slightly increase the Krylov solve time in some cases, the total time is consistently improved because the setup phase is substantially accelerated.
These results indicate that learning-based coarse-space construction can deliver robust and practically efficient two-level Schwarz preconditioning for heterogeneous porous-media flow simulations.

\section*{Acknowledgement}

The research of Eric Chung is partially supported by the Hong Kong RGC General Research Fund (Projects: 14305423 and 14305624), as well as the 1+1+1 CUHK-CUHK(SZ)-GDSTC Joint Collaboration Fund (Project: 2025A0505000059).

\end{document}